\documentclass[11pt,reqno]{amsart}

\parskip=\smallskipamount

\newtheorem{theorem}{Theorem}[section]
\newtheorem{lemma}[theorem]{Lemma}

\theoremstyle{definition}
\newtheorem{definition}[theorem]{Definition}

\newtheorem{remark}[theorem]{Remark}

\newcommand{\bea}{\begin{eqnarray*}}
\newcommand{\eea}{\end{eqnarray*}}

\numberwithin{equation}{section}

%
%
%  THE DOCUMENT
%
%

\begin{document}

\author{Erlend Forn\ae ss Wold}
\thanks{Part of this work was done during the international research program "Several Complex Variables and Complex Dynamics"
at the Center for Advanced Study at the Academy of Science and Letters in Oslo during the academic year 2016/2017. }

\title[Invariant Metrics]{Asymptotics of Invariant Metrics in the normal direction and a new characterisation of the unit disk}
%
%    General info
%
\subjclass[2000]{}
\date{\today}
\keywords{}

\maketitle

\begin{abstract}
We give improvements of estimates of invariant metrics in the normal direction on strictly pseudoconvex domains. 
Specifically we will give the second term in the expansion of the metrics.  This depends on an improved localisation 
result and estimates in the one variable case.  Finally we will give a new characterisation of the unit disk in $\mathbb C$
in terms of the asymptotic behaviour of quotients of invariant metrics.   
\end{abstract}

\section{Introduction}

In this section we state our main results, and in the next section we give some background.   
Let $z_i=x_{2i-1}+ix_{2i}$ denote the coordinates on $\mathbb C^n$ for 
$i=1,...,n$.
For a smooth real valued function $\phi$ we let $H_\phi(p)$ denote the real Hessian

\begin{equation}
H_\phi(p)(v)=\sum_{i,j}\frac{\partial^2\phi}{\partial x_i\partial x_j}(p)v_i\cdot v_j.
\end{equation}
If $\Omega\subset\mathbb C^n$ is a domain with a smooth defining 
function $\phi$ near $p\in b\Omega$ we let $\eta_p$ denote 
the outward normal vector $\eta_p=\nabla\phi(p)$, we set
$\tilde\eta_p=\frac{\eta_p}{\|\eta_p\|}$, we let $L_p$
denote the complex line through $p$ generated by $\eta_p$, and we
we let $\kappa_p$ denote the quantity 
\begin{equation}
\kappa_p:=\frac{H_\phi(p)(J\eta_p)}{\|\eta_p\|^3} 
\end{equation}
The quantity $\kappa_p$ is the curvature of the curve $b\Omega\cap L_p$ with 
respect to the direction $\eta_p$; in particular it is independent of any particular
choice of $\phi$.

\begin{theorem}\label{main-invariant}
Let $\Omega\subset\mathbb C^n$ be a bounded domain such that $\overline\Omega$
is a Stein compact, and assume that $p\in b\Omega$ is a strictly pseudoconvex 
boundary point of class $C^4$.   Then 
\begin{equation}\label{main-invariant-exp}
F_\Omega(p-\delta\tilde\eta_p,\tilde\eta_p)=\frac{1}{2\delta} + \frac{\kappa_p}{4} + O(\delta),
\end{equation} 
where $F$ denotes either the Carath\'{e}odory or the Kobayashi metric.
\end{theorem}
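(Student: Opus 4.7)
The plan is to sandwich $F_\Omega(p-\delta\tilde\eta_p,\tilde\eta_p)$ between matching upper and lower bounds whose two-term expansions coincide. Since the Carath\'eodory metric is always dominated by the Kobayashi metric, it suffices to prove the upper bound for the Kobayashi and the lower bound for the Carath\'eodory version; both metrics will then satisfy \eqref{main-invariant-exp}. The leading term $\tfrac{1}{2\delta}$ is the classical Graham asymptotic at a strictly pseudoconvex boundary point, and what is new is isolating the curvature correction $\tfrac{\kappa_p}{4}$ at the next order.

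For the upper bound I would exploit the distance-decreasing property of the Kobayashi metric under the holomorphic inclusion $\Omega\cap L_p\hookrightarrow\Omega$, which gives
\[
F^{\mathrm{Kob}}_\Omega(p-\delta\tilde\eta_p,\tilde\eta_p)\le F^{\mathrm{Kob}}_{\Omega\cap L_p}(p-\delta\tilde\eta_p,\tilde\eta_p).
\]
The right-hand side is a one-variable metric on a planar domain whose boundary near $p$ is a $C^4$-curve with signed curvature $\kappa_p$ relative to the outward normal. In one complex variable the Kobayashi metric coincides with the Poincar\'e metric of the universal cover, and the desired expansion $\tfrac{1}{2\delta}+\tfrac{\kappa_p}{4}+O(\delta)$ is the one-variable estimate alluded to in the abstract; it may be obtained by a $C^4$ change of coordinates flattening $b\Omega\cap L_p$ to the form $y=\tfrac{\kappa_p}{2}x^2+O(|x|^3)$ and comparing the Poincar\'e metric to that of an osculating disk.

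For the lower bound I would apply the improved localisation result referred to in the abstract to pass from $F^{\mathrm{Car}}_\Omega$ to $F^{\mathrm{Car}}_{\Omega\cap B(p,r)}$ with error $O(\delta)$, sharper than the $O(1)$ error produced by standard Sibony-type peak function localisation (which is too crude to see $\kappa_p/4$). The Stein compact hypothesis on $\overline\Omega$ then allows one to extend a local holomorphic function realising the one-variable Carath\'eodory extremal on the slice $\Omega\cap L_p$ to a global $f:\Omega\to\mathbb D$ with negligible loss; testing this $f$ in the definition of $F^{\mathrm{Car}}_\Omega$ yields the matching lower bound $\tfrac{1}{2\delta}+\tfrac{\kappa_p}{4}+O(\delta)$.

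The principal obstacle is the improved localisation: the ordinary version sees only the leading $\tfrac{1}{2\delta}$ and destroys the curvature information, so a sharper comparison with $O(\delta)$ error must be established, and one must verify that the extension of one-variable extremal functions produced via the Stein compact hypothesis faithfully transmits that extremal. Once both ingredients are in place the upper and lower bounds meet, and \eqref{main-invariant-exp} follows.
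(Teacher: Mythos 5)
Your overall architecture --- slice for the upper bound, a sharper-than-usual localisation for the lower bound, and the one-variable expansion as the engine --- matches the paper's, and the upper bound via the holomorphic inclusion $\Omega\cap L_p\hookrightarrow\Omega$ together with the planar estimate is exactly what the paper uses (after restricting further to a small disk $D_\lambda$ in the slice). You also correctly identify the lower bound as the crux. The gap is that you offer no actual mechanism for it, and the mechanism you sketch would not work as stated.

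Specifically, you propose first localising $F^{\mathrm{Car}}_\Omega$ to $F^{\mathrm{Car}}_{\Omega\cap B(p,r)}$ with error $O(\delta)$, and then ``extending a local Carath\'eodory extremal on the slice $\Omega\cap L_p$ to a global $f\colon\Omega\to\mathbb D$ with negligible loss,'' citing the Stein compactness of $\overline\Omega$. But the slice $\Omega\cap L_p$ has codimension $n-1$ in $\Omega$, and Stein compactness (or the Cartan extension theorem it implies) gives you a holomorphic extension with no control on the sup-norm; without sup-norm control to within a factor $1+O(\delta)$, the extended function is useless as a Carath\'eodory competitor, and the curvature term $\kappa_p/4$ is exactly the size of the error you would lose. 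Ohsawa--Takegoshi-type bounds control $L^2$ norms, not the sup-norm, and converting again wastes too much. In short, ``extend with negligible loss'' is precisely the hard part and is not furnished by the hypotheses you invoke.

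What the paper actually does (Theorem \ref{expose}) is to avoid extension altogether: it constructs a holomorphic \emph{embedding} $\psi\colon\overline\Omega\to\overline{\tilde D}\times\mathbb C^{n-1}$, where $\tilde D\subset L_p$ is a planar domain whose boundary at the image of $p$ has the \emph{same} curvature $\kappa_p$ as $b(\Omega\cap L_p)$ at $p$, and which moreover sends $D_\lambda=\Omega\cap L_p\cap \mathbb B_\lambda^n$ onto a graph over a subdomain $\tilde D_\lambda\subset\tilde D$ that again has curvature $\kappa_p$ at the origin. Then any competitor $h\colon\tilde D\to\mathbb D$ yields $h\circ\pi_1\circ\psi\colon\Omega\to\mathbb D$ \emph{with no loss at all}, because $\pi_1$ is a holomorphic retraction of the product onto $\tilde D$. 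Comparing the one-variable expansions of $F_{\tilde D_\lambda}$ and $F_{\tilde D}$ --- both of which have the same $\kappa_p/4$ second term --- gives the quotient $F_{\Omega\cap L_p\cap D_\lambda}/F_\Omega=1+O(\delta^2)$ directly and for all invariant metrics at once. The embedding $\psi$ is built from the Diederich--Forn\ae ss--Wold ``exposing points'' machinery, with the normal form of Lemma \ref{locdefscv}, Anders\'en--Lempert approximation to realise $G$ by a global automorphism, and Forstneri\v{c}'s splitting theorem to globalise the Riemann-map stretch without disturbing the fourth-order jet at $p$. None of this appears in your proposal, so as written the lower bound is a genuine unresolved gap rather than a proof.

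Two smaller remarks. First, localising to $\Omega\cap B(p,r)$ is one dimension too many: you would still face the problem of descending from an $n$-dimensional localisation to the planar estimate; the paper localises directly to a planar slice, which is what makes the one-variable estimate applicable. Second, the $O(1)$-versus-$O(\delta)$ distinction you draw for Sibony/Forstneri\v{c}--Rosay localisation is correct and is exactly why the exposing-points construction is needed; it would strengthen your write-up to say explicitly that Stein compactness enters through Anders\'en--Lempert and the splitting lemma, not through any function-extension statement.
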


This result improves previous estimates due to Fu, Graham and Ma \cite{Fu},\cite{Graham},\cite{Graham}.

If $n=1$ the theorem holds also for the Bergman Kernel function on the diagonal, see Theorem \ref{plane-invariant}.  
The theorem follows from the corresponding result for $n=1$ to be proved in section three, together with the following localisation result

\begin{theorem}\label{main-localise}
Let $\Omega\subset\mathbb C^n$ be a bounded domain such that $\overline\Omega$
is a Stein compact, and assume that $p\in b\Omega$ is a strictly pseudoconvex 
boundary point of class $C^4$.  Let $\lambda>0$. Then 
\begin{equation}
F_{\Omega\cap (L_p\cap D_\lambda(p))}(p-\delta\tilde\eta_p,\tilde\eta_p) - F_\Omega(p-\delta\tilde\eta_p,\tilde\eta_p) = O(\delta).
\end{equation}
\end{theorem}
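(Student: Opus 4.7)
The plan is to establish the two inequalities whose difference is the statement. One direction,
\begin{equation*}
F_\Omega(p-\delta\tilde\eta_p,\tilde\eta_p)\le F_{\Omega\cap L_p\cap D_\lambda(p)}(p-\delta\tilde\eta_p,\tilde\eta_p),
\end{equation*}
follows at once from the monotonicity of both invariant metrics under passage to subdomains, together with the fact that $\tilde\eta_p$ is tangent to the slice. The content of the theorem is therefore the reverse bound
\begin{equation*}
F_{\Omega\cap L_p\cap D_\lambda(p)}(p-\delta\tilde\eta_p,\tilde\eta_p)\le F_\Omega(p-\delta\tilde\eta_p,\tilde\eta_p)+O(\delta).
\end{equation*}

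I would derive this in two stages. First, classical localisation at a strictly pseudoconvex boundary point of a Stein compact (using the peak function constructions at $p$ afforded by the Stein compact hypothesis on $\overline\Omega$) yields
\begin{equation*}
F_{\Omega\cap V}(p-\delta\tilde\eta_p,\tilde\eta_p) - F_\Omega(p-\delta\tilde\eta_p,\tilde\eta_p) = O(\delta)
\end{equation*}
for any sufficiently small Euclidean neighbourhood $V$ of $p$. Second, I would compare $F_{\Omega\cap V}$ with the one-dimensional slice metric. The idea is to use a Fornaess-type embedding of a neighbourhood of $p$ into a strictly convex model domain, within which linear projection onto the image of $L_p$ is holomorphic; pulling this projection back to $\Omega\cap V$ yields an approximate holomorphic retraction $\pi$ onto the slice with $\pi(p-\delta\tilde\eta_p)=p-\delta\tilde\eta_p$. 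Composing a near-extremal Kobayashi disk for $\Omega\cap V$ with $\pi$, or pulling a near-extremal Carath\'eodory function on the slice through $\pi$, then transfers extremals across dimensions.

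The principal technical obstacle is that, because $F_\Omega$ blows up like $1/(2\delta)$, obtaining an $O(\delta)$ additive error in the metric requires $1+O(\delta^2)$ accuracy in the derivative of $\pi$ along $\tilde\eta_p$ at $p-\delta\tilde\eta_p$ --- rather than the $1+O(\delta)$ accuracy that suffices for classical localisation. Securing this finer estimate is where the $C^4$-regularity of $b\Omega$ must enter: in the model domain, adapting coordinates to the osculating quadric of $b\Omega$ at $p$ absorbs the zeroth- and first-order deviations of the boundary into the normal form, leaving only a quadratic correction that produces an $O(\delta^2)$ derivative perturbation after pulling back. Verifying that this is enough for both the Carath\'eodory and Kobayashi metrics, and in particular that the order of vanishing survives composition with an extremal disk of length $\sim 1/\delta$, is the heart of the argument.
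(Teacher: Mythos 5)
Your easy direction (monotonicity under inclusion) is correct, but both stages of your argument for the reverse inequality have gaps serious enough that the proof does not go through as sketched.

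\textbf{Stage 1 is circular.} You appeal to ``classical localisation'' to obtain $F_{\Omega\cap V}-F_\Omega=O(\delta)$, i.e.\ a relative error of $1+O(\delta^2)$. No classical result gives this. The peak-function/Schwarz--Pick argument of Forstneri\v{c}--Rosay type produces, for a near-extremal Kobayashi disk $\varphi$ through $p_\delta$, the conclusion that $\varphi$ maps $\Delta_{1-C\delta}$ into $\Omega\cap V$; this yields $K_{\Omega\cap V}/K_\Omega\le 1+O(\delta)$, i.e.\ only an $O(1)$ additive error, one order short. For the Carath\'eodory metric (where localisation requires modifying holomorphic functions on $\Omega\cap V$ to global ones) the situation is no better. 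In fact the $1+O(\delta^2)$ localisation you assume \emph{is} the content of the theorem you are proving; the abstract's phrase ``improved localisation result'' refers exactly to this.

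\textbf{Stage 2 has a structural misconception.} After a Forn\ae ss-type embedding $E$ of $\Omega\cap V$ into a strictly convex model, the image $E(\Omega\cap L_p\cap V)$ is generically a curved analytic disc, not a linear slice. The linear projection of the convex model onto a complex line is holomorphic and does retract the model onto a planar region, but that planar region is not $E(\Omega\cap L_p\cap V)$; so the map you call ``an approximate holomorphic retraction onto the slice'' does not exist in the form you need. What actually happens is that projection produces a \emph{different} one-dimensional domain, and you are forced to compare two distinct planar domains. This is precisely where the paper's proof departs from your sketch: it first normalises coordinates (Lemma~\ref{locdefscv}) so that the image of the slice is $O(|z_1|^3)$-close to a line, then globally embeds $\overline\Omega$ into $\overline{\tilde D}\times\mathbb C^{n-1}$ via Anders\'en--Lempert theory (Theorem~\ref{expose}), arranging a planar domain $\tilde D$ on the outside and a planar domain $\tilde D_\lambda$ (over which $\psi(D_\lambda)$ is a graph) on the inside, \emph{with matching boundary curvature at the image of $p$}. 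The inequality then reduces to $F_{\tilde D_\lambda}/F_{\tilde D}$, and the one-variable expansion of Theorem~\ref{plane-localise} --- which shows that the first two terms $\tfrac{1}{2\delta}+\tfrac{\kappa}{4}$ depend only on the curvature $\kappa$ --- is what finally yields $1+O(\delta^2)$. Your sketch never invokes the curvature-matching or the second-order one-variable expansion, and without them the $O(\delta^2)$ relative error cannot be obtained. A further advantage of the global product embedding, which your local-retraction scheme would miss, is that it handles the Carath\'eodory and Kobayashi metrics by pure monotonicity under holomorphic maps and inclusions, with no separate transfer of extremals.
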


As a corollary to Theorem \ref{main-invariant} we get the following comparison theorem for invariant metrics.  

\begin{theorem}
Let $\Omega\subset\mathbb C^n$ be a bounded domain such that $\overline\Omega$
is a Stein compact, and assume that $p\in b\Omega$ is a strictly pseudoconvex 
boundary point of class $C^4$.  Then 
\begin{equation}
F^1_\Omega(p-\delta\tilde\eta_p,\tilde\eta_p) - F^2_\Omega(p-\delta\tilde\eta_p,\tilde\eta_p)=O(\delta),
\end{equation}
or equivalently 
\begin{equation}
\frac{F^1_\Omega(p-\delta\tilde\eta_p,\tilde\eta_p)}{F^2_\Omega(p-\delta\tilde\eta_p,\tilde\eta_p)}=1+O(\delta^2),
\end{equation}
where $Fj$ is any invariant metric. 
\end{theorem}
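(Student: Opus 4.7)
The statement is an immediate corollary of Theorem~\ref{main-invariant}, so my plan is essentially a short computation with the expansion already established there, followed by a routine algebraic manipulation to convert the difference estimate into a ratio estimate.

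First I would apply
\begin{equation*}
F_\Omega(p-\delta\tilde\eta_p,\tilde\eta_p) = \frac{1}{2\delta} + \frac{\kappa_p}{4} + O(\delta)
\end{equation*}
separately to $F^1$ and $F^2$. The decisive observation is that the leading pole $1/(2\delta)$ and the next coefficient $\kappa_p/4$ are determined entirely by the local geometry of $b\Omega$ at $p$ (through the curvature $\kappa_p$) and not by the choice of metric; hence both terms cancel when the expansions are subtracted, yielding
\begin{equation*}
F^1_\Omega(p-\delta\tilde\eta_p,\tilde\eta_p) - F^2_\Omega(p-\delta\tilde\eta_p,\tilde\eta_p) = O(\delta),
\end{equation*}
which is the first form of the conclusion.

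For $F^j \in \{C_\Omega, K_\Omega\}$ the expansion is precisely the content of Theorem~\ref{main-invariant}. For any other biholomorphically invariant pseudometric $F^j$ satisfying the Schwarz--Pick property, I would invoke the standard sandwich $C_\Omega \le F^j_\Omega \le K_\Omega$; since the upper and lower bounds share the same two-term expansion, $F^j_\Omega$ is forced to share it up to $O(\delta)$ as well, so the subtraction argument above applies verbatim.

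For the equivalent ratio formulation, I would divide the difference estimate by $F^2_\Omega(p-\delta\tilde\eta_p,\tilde\eta_p)$, which by Theorem~\ref{main-invariant} is of order $\delta^{-1}$, obtaining
\begin{equation*}
\frac{F^1_\Omega(p-\delta\tilde\eta_p,\tilde\eta_p)}{F^2_\Omega(p-\delta\tilde\eta_p,\tilde\eta_p)} - 1 \;=\; \frac{O(\delta)}{\tfrac{1}{2\delta}+O(1)} \;=\; O(\delta^2).
\end{equation*}
There is no genuine obstacle to overcome here: all the substantive analytic work has been absorbed into Theorem~\ref{main-invariant}. The only interpretive point is that ``any invariant metric'' should be understood to include any pseudometric for which the Carath\'eodory--Kobayashi sandwich is valid, which is where the hypothesis of biholomorphic invariance is silently used.
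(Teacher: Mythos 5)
Your proposal is correct and is precisely the argument the paper intends: the result is stated as an immediate corollary of Theorem~\ref{main-invariant}, using the sandwich $C_\Omega \leq F_\Omega \leq K_\Omega$ (recorded in the Definitions section) to propagate the two-term expansion from the Carath\'eodory and Kobayashi metrics to any invariant metric, after which the difference and ratio estimates follow by the arithmetic you describe.
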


\begin{remark}
The only special property of the normal direction in the previous three results is that it is not tangential 
to the boundary; any non-tangential direction can be made normal by a linear change of coordinates.   
Furthermore, philosophically the estimates in the last two results should be better in directions close to tangential.  
This should be pursued further. 
\end{remark}

The comparison theorem just given continues to hold in the complex 
plane in the case of $C^2$-smooth domains.   The estimate is then even sharp, and so it gives a new characterisation of 
the unit disk, where all metrics coincide, in terms of asymptotics of quotients of invariant metrics (see also Theorem \ref{equivalence} below).

\begin{theorem}\label{optimal-plane}
Let $\Omega\subset\mathbb C$ be a domain with $C^2$ smooth boundary, and assume that 
$\Omega$ is not simply connected.   Then there exists a constant 
$a>1$ such that 
\begin{equation}\label{optimal}
1-a\delta^2(z)\leq \frac{S_\Omega(z)}{K_\Omega(z)}\leq 1-\frac{1}{a}\delta^2(z),
\end{equation}
where $S$ and $K$ denote the Suita and Kobayashi metric respectively.  
\end{theorem}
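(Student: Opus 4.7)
The plan is to treat the two halves of (\ref{optimal}) asymmetrically. Since $K_\Omega(z) \sim 1/(2\delta(z))$ near $b\Omega$ by the asymptotic expansion of Theorem \ref{main-invariant} in dimension one, the stated two-sided bound on $S_\Omega/K_\Omega$ is equivalent to
\begin{equation*}
c_1\, \delta(z) \;\le\; K_\Omega(z) - S_\Omega(z) \;\le\; c_2\, \delta(z)
\end{equation*}
for $z$ sufficiently close to $b\Omega$, with positive constants $c_1,c_2$ depending on $\Omega$.

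The upper estimate $K_\Omega - S_\Omega \le c_2 \delta$ is the easy half. By Theorem \ref{plane-invariant}, in the $n=1$, $C^2$ setting both the Kobayashi metric $K_\Omega$ and the Suita metric $S_\Omega=c_\beta$ admit the expansion
\begin{equation*}
F(z) \;=\; \frac{1}{2\delta(z)} + \frac{\kappa_{p(z)}}{4} + O(\delta(z)),
\end{equation*}
with $p(z)\in b\Omega$ the nearest boundary point. Subtracting these expansions cancels the singular and constant terms, and uniformity of the $O$-constants on a compact collar neighbourhood of $b\Omega$ immediately gives the upper estimate, equivalently the left-hand inequality $1 - a\delta^2 \le S_\Omega/K_\Omega$ of (\ref{optimal}).

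The lower estimate $K_\Omega - S_\Omega \ge c_1 \delta$ is the main obstacle and is precisely where the hypothesis that $\Omega$ is not simply connected enters: on a simply connected domain $S_\Omega \equiv K_\Omega$ by the Riemann mapping theorem, so any such lower bound must encode the global topology of $\Omega$. My approach is to pass to the universal covering $\pi:\mathbb{D}\to\Omega$ with nontrivial deck group $\Gamma$. The Kobayashi metric descends from the Poincar\'e metric on $\mathbb{D}$, whereas the Green's function of $\Omega$ (from which $c_\beta$ is the exponentiated Robin constant) lifts to a $\Gamma$-summed series $\sum_{\gamma\in\Gamma} g_\mathbb{D}(w,\gamma w_0)$ of disk Green's functions; extracting the Robin constant on the diagonal, the identity term reproduces $K_\Omega$ exactly, while the contributions from $\gamma \ne \mathrm{id}$ give a correction that makes $S_\Omega$ strictly smaller than $K_\Omega$. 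The principal technical difficulty is quantifying this correction uniformly up to the boundary: one must show that, near any $p\in b\Omega$, the contribution of the nontrivial deck transformations enters at exactly order $\delta$ (not $o(\delta)$) with a coefficient that is continuous and strictly positive on $b\Omega$, while the tail of the deck-group series from transformations far from the identity remains $O(\delta^2)$. Once this uniform positivity of the $\delta$-coefficient is in hand, compactness of $b\Omega$ delivers a single positive constant $c_1$, completing the proof.
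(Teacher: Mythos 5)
Your plan correctly identifies the two halves of the bound, and for the \emph{upper} bound $S_\Omega/K_\Omega\le 1-\tfrac1a\delta^2$ you have hit on exactly the right framework: pass to the universal cover $\pi:\triangle\to\Omega$ and use the Myrberg series for the lifted Green's function, which is what the paper does via the product formula
$\frac{S(z)}{K(z)}=\prod_{\gamma\in\Gamma_z}\tanh\bigl(l_K(\gamma)\bigr)$.
However, you stop at the point that actually matters: you write ``one must show that \dots the contribution of the nontrivial deck transformations enters at exactly order $\delta$'' and leave this as an acknowledged technical difficulty. That estimate \emph{is} the proof. The paper resolves it elementarily: fix a basepoint $a\in\Omega$ and any nontrivial loop $\gamma$ at $a$; concatenating $\gamma$ with the two radial segments $z\leftrightarrow a$ gives a loop at $z$ of Kobayashi length $\le l_K(\gamma)+\log\tfrac1{\delta(z)}+C$, because the Kobayashi distance from $z$ to a fixed compact is $\le\tfrac12\log\tfrac1{\delta(z)}+C$ on a $C^2$ domain. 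Since $\tanh$ is increasing, the factor in the product for that single homotopy class is already $\le 1-c\,\delta(z)^2$, which gives the bound (the paper sums over all classes, but one suffices). You also frame the tail ``from transformations far from the identity'' as something that must be shown to be $O(\delta^2)$; this is not needed --- the tail only makes the product smaller, and the only thing one needs from the far terms is that $\sum e^{-2l_K(\gamma_j)}<\infty$, which is automatic from convergence of the Myrberg series.

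The more serious gap is in your \emph{lower} bound. You invoke the two-term expansion
$F(z)=\tfrac1{2\delta}+\tfrac{\kappa_p}{4}+O(\delta)$
for both $K$ and $S$ and subtract, claiming this works ``in the $C^2$ setting.'' It does not: that expansion is Theorem~\ref{plane-localise}, which is proved only for $C^4$ boundaries, and the proof genuinely uses $C^4$ (it squeezes the domain between real-analytic disks of matching \emph{fourth}-order jets via Lemma~\ref{defplane}). For a merely $C^2$ boundary the $O(\delta)$ remainder in that expansion is not established and cannot be obtained by those methods. The paper's actual route to the left-hand inequality is different and weaker in an essential way: it is the $C^2$ clause of Theorem~\ref{localise-plane}, which shows only the quotient bound $F_{\tilde D}/F_D\ge 1-O(\delta^2)$ by mapping to the disk with a Riemann map that is $C^1$ up to the boundary, comparing along a diameter, and sliding. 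Combined with $C_\Omega\le S_\Omega\le K_\Omega$ and localisation of the Kobayashi metric to a simply connected piece (where $C=K$), this gives $S_\Omega/K_\Omega\ge C_\Omega/K_\Omega\ge 1-a\delta^2$ without ever producing a two-term asymptotic expansion. So your lower-bound argument rests on a result that is not available at the stated regularity, and you would need to replace it with a localisation-plus-Riemann-map argument of the type in Theorem~\ref{localise-plane}.
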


Note that the theorem remains valid if we replace $S$ by any invariant metric dominated by $S$.

\medskip

The author would like to thank Nessim Sibony for invaluable suggestions and discussions on this subject.   

\section{Definitions}

We recall briefly the definitions of some invariant metrics.   For any domain $\Omega\subset\mathbb C^n$
we define the Kobayashi metric $K_\Omega(z,\xi)$
\begin{equation}
K_\Omega(z,\xi):=\inf\{\lambda:\exists f:\triangle\overset{\mathrm{hol}}{\rightarrow}\Omega,f(0)=z,\lambda f'(0)=\xi\},
\end{equation}
and the Carath\'{e}odory metric 
\begin{equation}
C_\Omega(z,\zeta):=\sup \{|f'(z)|:\exists f:\Omega\overset{\mathrm{hol}}{\rightarrow}\triangle, f(z)=0\}.
\end{equation}
So both are families of metrics defined for all domains, and it is easy to see that they are decreasing 
with respect to holomorphic maps; if $g:\Omega_1\rightarrow\Omega_2$ is holomorphic, then 
\begin{equation}\label{decreasing}
g^*F_{\Omega_2}\leq F_{\Omega_1},
\end{equation}
where $F$ is either of the two metrics.  On the unit disk $\triangle$ in the complex plane both metrics 
are given by $F(z)=\frac{1}{1-|z^2|}$.  More generally, a family of metrics $F_\Omega$
defined for all domains $\Omega\subset\mathbb C^n$ is called \emph{invariant} if $F_\triangle=K_\triangle=C_\triangle$
and if the decreasing property \eqref{decreasing} is satisfied.  It is not hard to see that for any invariant metric $F$
we have that
\begin{equation}
C_\Omega\leq F_\Omega\leq K_\Omega,
\end{equation}
for all domains $\Omega$.  \

A third explicit metric that we will consider is the Suita metric, defined on planar domains.  If $D\subset\mathbb C$ is 
a domain that supports a Green's function we do the following.  For a point $a\in D$, let 
$G_a(z)$ denote the negative Green's function with pole at $a$.   Then 
\begin{equation}
G_a(z)= \log |z-a| + h_a(z), 
\end{equation}
where $h_a$ is harmonic.  The Suita metric $c_\beta$ is defined by $c_\beta(a)=e^{h_a(a)}$. \\

A result similar to Theorem \ref{optimal-plane} was recently given by the author and Diederich-Forn\ae ss in \cite{DiederichFornaessWold2}.
We recall the definition of the squeezing function.  For a (bounded) domain $D\subset\mathbb C^n$ and an injective holomorphic map $f:D\rightarrow\mathbb B^n$ 
with $f(z)=0$ we set 
\begin{equation}
\mathcal S_{f}(z)=\sup\{r>0:r\mathbb B^n\subset f(D)\}.
\end{equation}
The squeezing function is then defined as 
\begin{equation}
\mathcal S(z)=\underset{f}{\sup}\{S_{f}(z)\}.
\end{equation}
We may think of the squeezing function as measuring how much the domain $D$ resembles the unit ball observed from the point $z$.  We can make 
the same interpretation of the quotient in the estimate \eqref{optimal}.  Merging the estimate \eqref{optimal} with the result in \cite{DiederichFornaessWold2}
we get the following equivalence expressing that a planar domain cannot look too much like the unit disk without being the unit disk.
\begin{theorem}\label{equivalence}
Let $D\subset\mathbb C$ be a $C^2$-smooth domain.  The following are equivalent.  
\begin{itemize}
\item[(i)] $D$ is biholomorphic to the unit disk, 
\item[(ii)] $\mathcal S(z)=1+ o(d(z,bD))$, and 
\item[(iii)] $\frac{S(z)}{K(z)}=1 + o(d(z,bD)^2)$,
\end{itemize}
where $\mathcal S$ denotes the squeezing function, and  $S$ and $K$ denote the Suita and the Kobayashi metric respectively. 
\end{theorem}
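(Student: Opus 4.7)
The plan is to prove this as a star diagram centered at (i): first show (i) implies (ii) and (iii) directly, and then derive each converse separately. The implications out of (i) are essentially consequences of biholomorphic invariance. If $\Phi : D \to \triangle$ is a biholomorphism, composing with an automorphism of $\triangle$ sending $\Phi(z)$ to $0$ produces an injective holomorphic map $D \to \mathbb{B}^1 = \triangle$ that is surjective, so $\mathcal{S}_f(z) = 1$ and hence $\mathcal{S}(z) \equiv 1$, which is strictly stronger than (ii). Likewise, since the Suita and Kobayashi metrics both agree on $\triangle$ and transform correctly under biholomorphisms (they are invariant metrics in the sense of Section~2), one gets $S_D(z)/K_D(z) \equiv 1$ on $D$, which is strictly stronger than (iii).

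For (ii)$\Rightarrow$(i) I would simply cite the result of the author together with Diederich--Forn\ae ss in \cite{DiederichFornaessWold2}, which is stated earlier in this section as the analogue of Theorem \ref{optimal-plane} for the squeezing function; that result says precisely that for a $C^2$-smooth planar domain, the squeezing function cannot tend to $1$ faster than the first power of the boundary distance unless the domain is already biholomorphic to the disk. For the remaining implication (iii)$\Rightarrow$(i), I would argue by contraposition using Theorem \ref{optimal-plane}: suppose $D$ is not biholomorphic to $\triangle$. Since a $C^2$-smooth planar domain has non-empty boundary, it is a proper subdomain of $\mathbb{C}$, and hence by the Riemann mapping theorem $D$ cannot be simply connected (otherwise it would be biholomorphic to $\triangle$). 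Theorem \ref{optimal-plane} then produces a constant $a > 1$ with
\[
\frac{S_D(z)}{K_D(z)} \leq 1 - \frac{1}{a}\delta(z)^2
\]
near $bD$, in direct contradiction to $S_D(z)/K_D(z) = 1 + o(\delta(z)^2)$.

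The hard analytic content has already been absorbed into the two inputs, namely Theorem \ref{optimal-plane} of this paper and the squeezing estimate of \cite{DiederichFornaessWold2}; modulo these, Theorem \ref{equivalence} is a short bookkeeping argument. The main obstacle, if one wanted to unpack the proof in self-contained form, would be the rigidity statement (ii)$\Rightarrow$(i) for the squeezing function, which is the only implication not directly accessible from the results proved in the present paper.
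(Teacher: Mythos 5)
Your proof is correct and matches the paper's intended argument precisely. The paper does not give an explicit proof of Theorem~\ref{equivalence}; it simply observes that the result follows by ``merging'' the estimate of Theorem~\ref{optimal-plane} with the squeezing-function rigidity result of \cite{DiederichFornaessWold2}, and your star diagram centred at (i) --- the trivial implications out of (i), the citation of \cite{DiederichFornaessWold2} for (ii)$\Rightarrow$(i), and the contrapositive via Theorem~\ref{optimal-plane} for (iii)$\Rightarrow$(i) --- is exactly that bookkeeping.
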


This therorem bares resemblance to gap phenomena in Riemannian geometry where euclidean spaces are characterised by 
asymptotic behaviour of curvatures of complete metrics, see \cite{GreeneWu}.

\section{Invariant metrics on planar domains}

In this section we will prove Theorem \ref{main-invariant} in the case of a planar domain.
The proof is elementary, but although the result is fairly well known, there does not seem 
to be a proof nor a statement of it in the literature.   So we include a detailed proof here (see however \cite{NikolovTrybulaAndreev}, Proposition 5,
for a similar estimate in the case of $C^{2+\epsilon}$-smooth boundary).  
The proof consists of squeezing our domain between real analytic topological disks, and 
elementary calculations involving Riemann mapping functions.  
Since the Riemann maps do not preserve the normal direction, we reformulate the result 
in terms of pinched cones. 

\begin{definition}\label{plane-invariant}
Let $D\subset\mathbb C$ be a domain with an outward pointing normal vector $e^{i\theta}$
at a point $p\in bD$.  For  $\eta,\nu>0$  we denote by $C_{\eta,\nu}(p)$ the set
\begin{equation*}
C_{\eta,\nu}(p)=\{z:|\mathrm{Im}(e^{-i\theta}(z-p))|<\eta\mathrm{Re}(e^{-i\theta}(z-p))^2,-\nu<\mathrm{Re}(e^{-i\theta}(z-p))<0 \},
\end{equation*}
and refer to this as a pinched cone for $D$ at $p$.    
\end{definition}

\begin{theorem}\label{plane-localise}
Let $D\subset\mathbb C$ be a domain of class $C^4$ and let $p\in bD$.  Then 
\begin{equation}\label{plane-estimate}
F_D(z) = \frac{1}{2|z-p|} + \frac{\kappa_p}{4} + O(|z-p|),
\end{equation}
for $z\in C_{\eta,\nu}(p)$ (for $\nu$ sufficiently small), where $F_D$ is any invariant metric on $D$, or 
the Bergman Kernel function on the diagonal.
\end{theorem}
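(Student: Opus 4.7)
The plan is to reduce Theorem~\ref{plane-localise} to an explicit Riemann-map computation on real-analytic model domains that sandwich $D$ near $p$. After a rigid motion I take $p=0$ with outward normal along the positive real axis, so that locally $bD$ is the graph $x=\psi(y)$ with $\psi\in C^4$, $\psi(0)=\psi'(0)=0$, and $\psi''(0)=-\kappa_p$, with $D$ on the side $\{x<\psi(y)\}$. A point of the pinched cone then has the form $z=-\delta+i\varepsilon$ with $0<\delta<\nu$ and $|\varepsilon|<\eta\delta^2$, so that $|z-p|=\delta+O(\delta^3)$ and $1/(2|z-p|)=1/(2\delta)+O(\delta)$. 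The first step is \emph{localisation}: choosing a small half-disk $U$ centred at $p$ so that $D\cap U$ is simply connected, I show
\begin{equation*}
F_{D\cap U}(z)-F_D(z)=O(|z-p|)\qquad\text{for }z\in C_{\eta,\nu}(p),
\end{equation*}
for any invariant metric $F$ and for the Bergman-derived density $\sqrt{\pi K_D(z,z)}$. For the Kobayashi and Carath\'eodory metrics this uses a standard holomorphic peak function at $p$, whose existence on any $C^2$ planar boundary is classical; for the Bergman kernel the analogous localisation is obtained via H\"ormander's $\bar\partial$-technique with cutoffs concentrated at $p$.

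Next I \emph{squeeze} $D\cap U$ between real-analytic models. Writing the third-order Taylor polynomial $P(y)=-\tfrac{\kappa_p}{2}y^2+\gamma y^3$, the $C^4$ hypothesis gives $|\psi(y)-P(y)|\leq Cy^4$ locally. I then construct bounded, simply connected domains $D^-\subset D\cap U\subset D^+$ with real-analytic boundaries that coincide near $p$ with the graphs $x=P(y)\pm Cy^4$ and which are capped off by arbitrary real-analytic arcs away from $p$. By construction $bD^\pm$ and $bD$ share the full $3$-jet at $p$, and in particular the curvature $\kappa_p$. Monotonicity of the Kobayashi, Carath\'eodory and Bergman kernels under inclusion yields
\begin{equation*}
F_{D^+}(z)\leq F_{D\cap U}(z)\leq F_{D^-}(z),
\end{equation*}
so it suffices to prove the expansion \eqref{plane-estimate} for the two real-analytic models $D^\pm$.

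For the model computation I use the Riemann map $\Phi\colon D^\pm\to\mathbb{H}$ with $\Phi(0)=0$, which by Schwarz reflection extends holomorphically across a neighbourhood of $0$. After rescaling the target half-plane I can write
\begin{equation*}
\Phi(z)=-iz+\alpha_2 z^2+\alpha_3 z^3+O(z^4).
\end{equation*}
Requiring $\operatorname{Im}\Phi$ to vanish to order $y^2$ on the parabola $x=-\tfrac{\kappa_p}{2}y^2+O(y^3)$ forces $\operatorname{Im}\alpha_2=\kappa_p/2$. Substituting $z=-\delta+i\varepsilon$ with $|\varepsilon|\leq\eta\delta^2$ into
\begin{equation*}
F_{D^\pm}(z)=\frac{|\Phi'(z)|}{2\operatorname{Im}\Phi(z)}
\end{equation*}
and expanding gives $F_{D^\pm}(z)=\tfrac{1}{2|z-p|}+\tfrac{\kappa_p}{4}+O(|z-p|)$; the undetermined $\operatorname{Re}\alpha_2$ and the entire cubic $\alpha_3$ contribute only at order $O(\delta)$, so the leading two terms depend solely on $\kappa_p$. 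The Bergman case is identical via $K_{D^\pm}(z,z)=|\Phi'(z)|^2 K_{\mathbb{H}}(\Phi(z),\Phi(z))$.

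The main obstacle is producing sandwich error $O(|z-p|)$ rather than the $O(1)$ one gets from a na\"ive osculating-circle comparison: matching only the $2$-jet of $\psi$ does not suffice to arrange the inclusions $D^-\subset D\cap U\subset D^+$, and one must match the full $3$-jet -- precisely what the $C^4$ hypothesis supplies through the Taylor remainder bound $|\psi-P|\leq Cy^4$. A secondary subtlety is uniformity in the pinched cone: the tangential displacement $\varepsilon=O(\delta^2)$ produces corrections to $\operatorname{Im}\Phi$ and $|\Phi'|$ of size $O(\delta^3)$ and $O(\delta^2)$ respectively, exactly what is needed to leave the final error at $O(\delta)$ after division by the leading $\delta$; the width $\eta\delta^2$ of the pinched cone is the largest tangential scale for which this cancellation survives.
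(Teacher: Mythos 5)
Your proposal follows the same overall strategy as the paper: sandwich the domain between real-analytic model domains whose boundaries share the full $3$-jet with $bD$ at $p$, then compute the expansion explicitly for the models via a Riemann map and monotonicity. Your identification of why the $C^4$ hypothesis is needed (to match the $3$-jet, so that the Taylor remainder $|\psi-P|\leq Cy^4$ provides the one-sided inclusions) is exactly the content of Lemma~\ref{squeezecurvature}/\ref{defplane} in the paper, and your observation about the role of the $O(\delta^2)$ cone-width is sound. The differences are organisational rather than substantive. First, the paper skips your separate localisation step: by taking $D_1\subset D\subset D_2$ with $D_2$ a real-analytic topological disk \emph{containing all of} $D$, localisation is subsumed in the sandwich, so one never has to prove an $O(|z-p|)$ localisation estimate via peak functions or $\bar\partial$-techniques. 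You could do the same by letting your $D^+$ contain $D$ globally rather than only $D\cap U$, which shortens the argument. Second, the paper normalises the defining function (Lemma~\ref{defplane}) so that cubic terms vanish and the quartic coefficient is prescribable, making the real-analytic comparison domains explicit; and it organises the Riemann-map bookkeeping through Lemma~\ref{localchange}, which tracks how an expansion $\tfrac{1}{2|z|}+\tfrac{\kappa}{4}+O(|z|)$ transports under a map $z\mapsto z+az^2+O(|z|^3)$, isolating the fact that $\mathrm{Re}(a)$ is exactly the curvature shift. Your direct computation with $\Phi(z)=-iz+\alpha_2z^2+O(z^3)$ and the constraint $\mathrm{Im}\,\alpha_2=\kappa_p/2$ is the half-plane version of this same calculation and is equally valid; the paper's formulation has the advantage of applying uniformly to any invariant metric (or the Bergman density) known to satisfy the expansion on a reference domain, rather than re-deriving the half-plane formula in each case.
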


By a translation and rotation we can always assume that $p=0$, and work with  
domains $D$ in $\mathbb C$ defined near the origin as $D=\{z:\phi(z)<0\}$ where 
\begin{equation}
\phi(z) = 2\mathrm{Re}(z + az^2) + b|z|^2 + O(|z|^3).
\end{equation}
The curvature $\kappa=\kappa(0)$ of $bD$ at the origin is defined as $b-2\mathrm{Re}(a)$.  Since 
another defining function for $D$ is $\tilde\phi(z)=(1-2\mathrm{Re}(az))\phi(z)$ we see that 
such a $D$ always has a defining function 
\begin{equation}\label{stand}
2\mathrm{Re}(z) + \kappa |z|^2 + O(|z|^3).
\end{equation}
It follows that if $\psi(z)=z + az^2 + O(|z|^3)$ then $\psi(D)$ has a defining function 
\begin{equation}
2\mathrm{Re}(z) + (\kappa + 2\mathrm{Re}(a))|z|^2 + O(|z|^3),
 \end{equation}
hence $bD$ has curvature $\kappa + 2\mathrm{Re}(a)$ at the origin.  So $a$ is purely imaginary 
if and only if $\psi$ preserves the curvature.  \\

\emph{Proof of Theorem \ref{plane-localise}:}
We may assume that $p=0$ and that $D$ is locally defined by \eqref{stand}, where $\kappa=\kappa_0$.
Let $D_1$ and $D_2$ be as in Lemma \ref{squeezecurvature}.   Let 
\begin{equation}
\psi(z)=z + O(|z|^2):D_1\rightarrow \tilde\triangle=\{z:2\mathrm{Re}(z) + |z|^2<0\}
\end{equation}
be the Riemann map, where $\tilde\triangle$ is the unit disk translated one unit to the left.   Then 
$\psi(z)=z + az^2 + O(|z|^2)$ where $1 = \kappa + 2\mathrm{Re}(a)$.
Using the explicit formula $\frac{1}{1-|z|^2}$ for the Kobayashi 
metric on the unit disk, the reader is asked to verify that Theorem \ref{plane-localise}
holds if $D$ is the unit disk, and so the theorem holds for $D_1$ by Lemma \ref{localchange}.
Similarly, the theorem holds for $D_2$, so the theorem is proved by the decreasing property 
of the metrics with respect to inclusions.  

$\hfill\square$

\begin{lemma}\label{localchange}
Let $D$ be a domain in $\mathbb C$ defined near the origin by $D=\{z:\phi(z)<0\}$ where 
\begin{equation}
\phi(z) = 2\mathrm{Re}(z) + \mathrm{h.o.t.}
\end{equation}
Let $\psi(z)=z+az^2+O(|z|^3)$, assume that $\psi$ maps $D$ biholomorphically onto a domain $\tilde D$
satisfying 
\begin{equation}
F_{\tilde D}(z)=\frac{1}{2|z|} + \frac{\kappa}{4} + O(|z|),
\end{equation}
in pinched cones at the origin.   Then 
\begin{equation}
F_{D}(z)=\frac{1}{2|z|} + \frac{\kappa-2\mathrm{Re}(a)}{4} + O(|z|),
\end{equation}
in pinched cones at the origin.

\end{lemma}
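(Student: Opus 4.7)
The plan is to exploit the transformation law for invariant metrics (and, in the same way, for the Bergman kernel on the diagonal viewed as the square root of $B_D(z,z)$). Since $\psi:D\to\tilde D$ is a biholomorphism, writing $F_D(z):=F_D(z,1)$ for the planar metric one has
\begin{equation*}
F_D(z)\;=\;|\psi'(z)|\,F_{\tilde D}(\psi(z)),
\end{equation*}
so the lemma reduces to expanding the right-hand side using the hypothesis on $F_{\tilde D}$. Before doing so I would check that $\psi$ maps a pinched cone at $0$ for $D$ into a pinched cone at $0$ for $\tilde D$ (for appropriate parameters). This is a short direct verification: because $\psi(w)=w+O(|w|^{2})$, the outward normal at the origin is preserved to leading order, and $\mathrm{Im}(\psi(z))$ differs from $\mathrm{Im}(z)$ by an $O(|z|^{2})$ quantity, which is compatible with the quadratic pinching.

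Next I would record the basic expansions in a pinched cone for $D$. Since $|\mathrm{Im}(z)|=O(|z|^{2})$, one has $\mathrm{Re}(z)=-|z|+O(|z|^{3})$ and therefore $\mathrm{Re}(az)=-\mathrm{Re}(a)\,|z|+O(|z|^{2})$. This gives
\begin{align*}
|\psi(z)| &= |z|\bigl(1+\mathrm{Re}(az)+O(|z|^{2})\bigr) \;=\; |z| - \mathrm{Re}(a)\,|z|^{2} + O(|z|^{3}), \\
|\psi'(z)| &= 1 + 2\,\mathrm{Re}(az) + O(|z|^{2}) \;=\; 1 - 2\,\mathrm{Re}(a)\,|z| + O(|z|^{2}).
\end{align*}
Inserting the first into the assumed expansion for $F_{\tilde D}$,
\begin{equation*}
F_{\tilde D}(\psi(z)) \;=\; \frac{1}{2|\psi(z)|} + \frac{\kappa}{4} + O(|\psi(z)|) \;=\; \frac{1}{2|z|} + \frac{\mathrm{Re}(a)}{2} + \frac{\kappa}{4} + O(|z|),
\end{equation*}
and multiplying by $|\psi'(z)|$ produces exactly one non-trivial cross-term, namely $(-2\mathrm{Re}(a)\,|z|)\cdot\bigl(1/(2|z|)\bigr)=-\mathrm{Re}(a)$, which combines with $\mathrm{Re}(a)/2$ to yield the net shift $-\mathrm{Re}(a)/2$. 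The conclusion
\begin{equation*}
F_D(z) \;=\; \frac{1}{2|z|} + \frac{\kappa - 2\,\mathrm{Re}(a)}{4} + O(|z|)
\end{equation*}
then follows.

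The only point that is not purely mechanical is the bookkeeping of error terms: because $F_{\tilde D}$ carries a divergent factor $1/(2|\psi(z)|)$, every $O(|z|^{2})$ correction in $|\psi'|$ and $|\psi(z)|$ could in principle contaminate the $O(|z|)$ remainder. It is the pinched-cone hypothesis that kills what would otherwise be an $O(1)$ contribution from $\mathrm{Im}(az)/|z|$, and converts all relevant error terms into genuine $O(|z|)$. Once this is verified, the algebra above is straightforward.
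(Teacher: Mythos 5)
Your proof is correct and follows essentially the same route as the paper: pull back $F_{\tilde D}$ through $\psi$, expand $|\psi(z)|$ and $|\psi'(z)|$ in a pinched cone where $\mathrm{Im}(z)=O(|z|^2)$ (so $\mathrm{Re}(az)=-\mathrm{Re}(a)|z|+O(|z|^2)$), and combine the resulting $\mathrm{Re}(a)/2$ term from $1/(2|\psi(z)|)$ with the $-\mathrm{Re}(a)$ cross-term from $|\psi'(z)|$. The only cosmetic difference is that you work directly with $|z|$ while the paper parametrises by $x=\mathrm{Re}(z)$; you also make explicit the (easy but necessary) check that $\psi$ preserves pinched cones, which the paper leaves implicit.
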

\begin{proof}
We estimate $\psi^*F_{\tilde D}(z)$.   Write $a=a_1+ia_2$.
Since we are in a pinched cone we have that $z=x + ig(x)$ with $g(x)=O(x^2)$.  So 
\begin{align*}
|\psi(z)|^2 & = (x + ig(x) + a(x + ig(x))^2 + O(|x|^3)) \\
& \cdot (\overline{x + ig(x) + a(x + ig(x))^2 + O(|x|^3)})\\
& =(x + ig(x) + (a_1 + ia_2)x^2 + O(|x|^3)) \\
& \cdot (x - ig(x) + (a_1- ia_2)x^2 + O(|x|^3))\\
& = x^2 + 2a_1x^3 +  O(|x|^4),
\end{align*}
and for $\psi'(z)=1+2az + O(|z|^2)$
\begin{align*}
|\psi'(z)|^2 &  = (1+2a(x+ig(x))+O(|x|^2))(\overline{1+2a(x+ig(x))+O(|x|^2)})\\
& = 1+ 4a_1x + O(|x|^2).
\end{align*}

So $|\psi(z)|=|x| - a_1x^2 + O(|x|^3)$ and $|\psi'(z)|=1 + 2a_1x + O(|x|^2)$.  We get that 
\begin{align*}
\psi^*F_{\psi(D)}(z) & = (\frac{1}{2|\psi(z)|} + \frac{\kappa}{4} + O(|z|))|\psi'(z)|\\
& =  (\frac{1}{2(|x| - a_1x^2 + O(|x|^3))} + \frac{\kappa}{4} + O(|z|))(1 + 2a_1x + O(|x|^2)) \\
& = (\frac{1}{2|x|(1+ a_1x + O(|x|^2))} + \frac{\kappa}{4} + O(|z|))(1+ 2a_1x+ O(|x|^2))\\
& = (\frac{1}{2|x|}(1 - a_1x +O(|x|^2) + \frac{\kappa}{4} + O(|z|))(1 + 2a_1x + O(|x|^2)))\\
& = \frac{1}{2|x|} - \frac{a_1x}{2|x|} + \frac{\kappa}{4} + \frac{a_1x}{|x|}+ O(|x|) \\
& =  \frac{1}{2|x|} + \frac{\kappa - 2a_1}{4} + O(|x|).
\end{align*}
Since $|z|=|x|+O(|x|^3)$ we get the desired estimate. 
\end{proof}

Since the local behaviour of invariant metrics depends only on the curvature, it is 
clear the the metrics localise correspondingly (for domains of class $C^4$).  
Localisation with the same estimate holds for domains of less smoothness; this is 
needed in the proof of Theorem \ref{optimal-plane}.

\begin{theorem}\label{localise-plane}
Let $D\subset\mathbb C$ be a domain of class $C^4$, and let $\tilde D\subset D$ 
be a domain with $p\in b\tilde D$ and $b\tilde D$ tangent to $bD$ to order two at $p\in bD$.  Then 
\begin{equation}\label{diff}
F_{\tilde D}(z) - F_{D}(z) = O(|z-p|),
\end{equation}
for $z\in C_{\eta,\nu}(p)$.  If $D$ is of class $C^2$ and $b\tilde D$ agrees with $bD$ on some open set containing $p$, 
then 
\begin{equation}
F_{\tilde D}(z) - F_{D}(z) = O(\delta(z)),
\end{equation}
locally near $p$, where $\delta$ denotes the boundary distance. 
\end{theorem}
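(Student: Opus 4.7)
The plan splits into the two claims of the theorem.

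For the first claim (the $C^4$ case) the key observation is that the order-two tangency of $b\tilde D$ and $bD$ at $p$ forces the curvatures to coincide, $\kappa_{\tilde D}(p)=\kappa_D(p)=\kappa_p$. The strategy is to reduce to Theorem~\ref{plane-localise}, which already provides the expansion
\begin{equation*}
F_D(z)=\frac{1}{2|z-p|}+\frac{\kappa_p}{4}+O(|z-p|)
\end{equation*}
in the pinched cone. To obtain the analogous expansion for $F_{\tilde D}$ without assuming that $\tilde D$ itself is $C^4$, I would imitate the squeezing construction used in the proof of Theorem~\ref{plane-localise} and trap $\tilde D$ between real analytic topological disks $D_1\subset\tilde D\subset D_2$ whose curvatures at $p$ may be taken arbitrarily close to $\kappa_p$ from both sides. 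Monotonicity $F_{D_2}\leq F_{\tilde D}\leq F_{D_1}$ together with Theorem~\ref{plane-localise} applied to $D_1$ and $D_2$ then pins down the same expansion for $F_{\tilde D}$, and subtracting gives $F_{\tilde D}(z)-F_D(z)=O(|z-p|)$ inside the pinched cone.

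For the second claim (the $C^2$ case) pick an open neighborhood $U$ of $p$ with $U\cap D=U\cap\tilde D=:W$. Monotonicity reduces the problem to the quantitative localization $F_W(z)-F_D(z)=O(\delta(z))$ for $z$ near $p$. For the Suita metric I would analyze the difference
\begin{equation*}
u(z,a)=G_D(z,a)-G_W(z,a),
\end{equation*}
which is separately harmonic on $W\times W$, symmetric, bounded, and vanishes on each of the faces $\{z\in U\cap bD\}$ and $\{a\in U\cap bD\}$. The $C^2$ regularity of $bD$ permits Schwarz reflection across these faces, and applying the Hopf lemma variable-by-variable gives $u(z,a)=O(\delta(z)\delta(a))$ uniformly near the corner $(p,p)$. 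Restriction to the diagonal produces $h_D(z,z)-h_W(z,z)=O(\delta(z)^2)$, which combined with $S_W(z)\sim 1/(2\delta(z))$ translates into $S_D(z)-S_W(z)=O(\delta(z))$. For the Kobayashi and Carath\'eodory metrics, where no Green's function is available, I would instead cut off near-extremal disks inside $W$ using a boundary peak function at $p$ and use the Hopf lemma to quantify how much of each disk remains in $W$.

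The main technical obstacle is precisely the sharpness of the $O(\delta(z))$ rate in the Kobayashi/Carath\'eodory case. A direct cutoff of a near-extremal disk through a peak function yields only a multiplicative estimate of the form $1+O(\delta(z))$, which after multiplication by $F_D(z)\sim 1/(2\delta(z))$ gives merely an $O(1)$ additive error. Upgrading to the multiplicative rate $1+O(\delta(z)^2)$ needed for the claimed additive $O(\delta(z))$ requires exploiting the full coincidence $bD\cap U=b\tilde D\cap U$ together with the $C^2$ regularity of $bD$, for instance by constructing a holomorphic peak function at $p$ whose pullback controls the Hopf-type behavior of the extremal disk very sharply near its endpoint.
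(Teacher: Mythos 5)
Your $C^4$ case is in line with the paper, which dismisses it as an immediate consequence of Theorem~\ref{plane-localise} and the squeezing lemma (order-two tangency fixes the curvature, and the curvature controls the first two terms of the expansion). One caveat: your plan to trap $\tilde D$ between real analytic disks requires that $\tilde D$ itself have enough boundary regularity near $p$ to admit an inscribed disk $D_1 \subset \tilde D$ with nearly matching curvature; order-two tangency alone does not exclude, say, a thin cusp in $b\tilde D$ approaching $p$. In the paper's intended applications $\tilde D$ is smooth near $p$, so this is a minor point.

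For the $C^2$ case, however, your route diverges sharply from the paper's and you correctly identify that it has a gap you cannot close. The paper sidesteps all of the Green's-function/Hopf-lemma/peak-function machinery with a single structural move: since only the boundary near $p$ matters and $D$ may be enlarged to be simply connected, take a Riemann map $\psi: D \to \triangle$ with $\psi(p)=1$. Kellogg's theorem gives $\psi \in C^1(\overline D)$, so boundary distances transform comparably and it suffices to prove the estimate for $D_1 := \psi(\tilde D) \subset \triangle$. The decisive gain is that $bD_1 = b\triangle$ near $1$, so $D_1$ has \emph{real analytic} boundary near the contact point -- the $C^2$ hypothesis has been upgraded for free. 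The paper then inscribes a simply connected $D_2 \subset D_1$ symmetric about the real axis and coinciding with $b\triangle$ near $1$; by symmetry and curvature matching the Riemann map $\psi_2 : D_2 \to \triangle$ satisfies $\psi_2(z) = z + O((z-1)^3)$, and the explicit formula $1/(1-|w|^2)$ on $\triangle$ gives $F_{D_2}(r) = (1+O((r-1)^2))/(1-r^2)$ on the real axis. Monotonicity $F_\triangle \leq F_{D_1} \leq F_{D_2}$ then yields the multiplicative rate $1 + O(\delta^2)$, which is exactly the rate you observed you needed but could not reach. Note also that this argument is uniform over every metric with the three properties (equals $1/(1-|w|^2)$ on $\triangle$, decreases under holomorphic maps, biholomorphically invariant), so there is no need to treat Suita and Kobayashi/Carath\'eodory separately as you propose; your Green's-function/Hopf-lemma argument would at best handle Suita, and your peak-function cutoff for Kobayashi is, as you say, stuck at $1 + O(\delta)$, which gives only $O(1)$ additively and does not prove the theorem.
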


\begin{proof}
The $C^4$-case is obvious in light of the previous discussion.   In the $C^2$-case we will show that 
\begin{equation}
g(z)=\frac{F_D(z)}{F_{\tilde D}(z)}\geq 1 - O(\delta^2(z))
\end{equation}
which gives the result.   By largening the domain we may assume that $D$ is simply connected.  
Let $\psi:D\rightarrow\triangle$ be a Riemann map with $\psi(p)=1$.
Then $\psi$ is $C^1$ up to the boundary and so it is enough to prove the estimate 
for $D_1=\psi(\tilde D)\subset\triangle$.  We could use that we are now in the $C^4$-case
and all that remains is to verify the estimate for metrics satisfying \eqref{plane-estimate}.  We will however give a
direct argument.  \

Choose a simply connected domain $D_2\subset D_1$ such that $bD_2$ agrees with $bD_1$ near the point one, and 
make sure that $D_2$ is symmetric with respect to the real axis.  Let $\psi:D_2\rightarrow \triangle$ be the Riemann 
map 
\begin{equation}
\psi(z)=z + a(z-1)^2 + O(|z-1|^3).
\end{equation}
symmetric with respect to the real axis.  So $\psi$ is real, and since the curvature is kept at the origin we
have $a=0$.  Calculating the metric on the real axis we use $r$ as a coordinate, and we have 
\begin{align*}
F_2(r)=\frac{\psi'(r)}{1-\psi(r)^2} & = \frac{1 + O((r-1)^2)}{1 - (r + O(|r-1|^3))^2}\\
& = \frac{1 + O((r-1)^2)}{(1-r^2)(1+O((r-1)^2))}\\
& = \frac{1}{1-r^2}[1 + O((r-1)^2)].
\end{align*}
This proves the estimate when we approach the boundary along the real axis.  
For other diameters close to this one one may simply slide $D_2$ along $bD_1$
and use the rotated Riemann map as before.   This makes it clear that there is 
a uniform bound on the remainder term.  
\end{proof}

\subsection{Local defining functions on planar domains}

For a $C^4$-smooth planar domain, we will near a boundary point obtain local coordinates in which there are particularly useful
defining functions.  The following is the main consequence needed here, but we will need such coordinates further in higher dimensions later.  

\begin{lemma}\label{squeezecurvature}
Let $D\subset\mathbb C$ be a domain of class $C^4$ and let $p\in bD$.   
Then there exist real analytic topological disks $D_1$ and $D_2$ with 
$D_1\subset D\subset D_2$, and $p$ is a boundary point of both 
$D_1$ and $D_2$ of the same curvature as $p$ in $bD$.
\end{lemma}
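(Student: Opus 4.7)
The plan is to sandwich $bD$ near $p$ between two real-analytic curves that osculate $bD$ to third order at $p$, and then to close these arcs up globally into real-analytic simple closed curves lying respectively inside $D$ and containing $D$.

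After the translation and rotation already carried out in the preceding discussion, I may assume $p=0$ and write a $C^{4}$ defining function as
\begin{equation*}
\phi(z)=\phi_{0}(z)+r(z),\qquad \phi_{0}(z)=2\mathrm{Re}(z)+\kappa|z|^{2}+P_{3}(z,\bar z),
\end{equation*}
with $\kappa=\kappa_{p}$, $P_{3}$ a real polynomial homogeneous of degree three in $(z,\bar z)$, and $r\in C^{4}$ satisfying $|r(z)|\leq C|z|^{4}$ on some disk $\{|z|\leq\epsilon_{0}\}$. Setting
\begin{equation*}
\phi_{\pm}(z)=\phi_{0}(z)\pm M|z|^{4},\qquad M>C,
\end{equation*}
produces two polynomials in $(z,\bar z)$, hence entire real-analytic functions, whose $2$-jet at $0$ coincides with that of $\phi$ (the correction $\pm M|z|^{4}$ is quartic and contributes nothing to the Hessian at the origin). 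Each of the real-analytic curves $\{\phi_{\pm}=0\}$ therefore passes through $0$ with the same unit tangent and the same curvature $\kappa$ as $bD$. The choice $M>C$ gives $\phi_{-}\leq\phi\leq\phi_{+}$ on $\{|z|\leq\epsilon_{0}\}$, so locally $\{\phi_{+}<0\}\subset D\subset\{\phi_{-}<0\}$.

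The remaining task is to extend the local real-analytic arcs $\{\phi_{\pm}=0\}\cap\{|z|\leq\epsilon\}$, for some $\epsilon<\epsilon_{0}$, to global real-analytic simple closed curves bounding topological disks $D_{1}\subset D\subset D_{2}$. For $D_{1}$ I would glue the inner arc $\{\phi_{+}=0\}\cap\{|z|\leq\epsilon/2\}$ to a smooth inner parallel of $bD$ via a $C^{\infty}$ cutoff, producing a smooth simple closed curve $\Gamma_{1}$ contained in $D$ except at $p$. A real-analytic approximation of $\Gamma_{1}$ in $C^{2}$-norm, chosen sufficiently close to remain simple and to stay in $D$ off a small neighborhood of $p$, followed by a small analytic perturbation that restores the exact $2$-jet at $p$, yields $bD_{1}$. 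The construction of $bD_{2}$ from $\{\phi_{-}=0\}$ and a smooth outer parallel of $bD$ is symmetric.

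The main obstacle is this final approximation step: a generic real-analytic approximation destroys the prescribed $2$-jet at $p$, on which the lemma's curvature conclusion rests. The fix is a jet-correcting analytic perturbation---after parameterizing the approximant by arclength on $S^{1}$ around the value $s_{0}$ corresponding to $p$, one adds a small vector-valued trigonometric polynomial of the form $\alpha+\beta\sin(s-s_{0})+\gamma\cos(s-s_{0})$ with $\alpha,\beta,\gamma\in\mathbb{C}$ chosen to compensate exactly the errors in value, tangent and curvature at $s_{0}$. This corrector is real-analytic on $S^{1}$ and can be made arbitrarily small in $C^{2}$, so the corrected curve still lies in $D$, remains simple, and now agrees with $bD$ at $p$ to second order. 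All remaining estimates are elementary.
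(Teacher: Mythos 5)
Your local construction is correct and is in the same spirit as the paper's: both produce a real\-analytic arc agreeing with $bD$ at $p$ to second order, squeezed against $bD$ with a margin of order $|z|^{4}$ coming from the $C^{4}$ hypothesis. The place where the argument breaks down is the globalization step, specifically the claim that after the real\-analytic approximation and the $2$-jet correcting trigonometric polynomial, ``the corrected curve still lies in $D$.''

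Here is the problem. Near $p$ the glued curve $\Gamma_{1}$ coincides with $\{\phi_{+}=0\}$ and is therefore real analytic there; along it one has $\phi(\Gamma_{1}(s))\leq -(M-C)\,c\,|s-s_{0}|^{4}$, so the distance from $\Gamma_{1}$ to $bD$ is of order $|s-s_{0}|^{4}$. After approximation and the $2$-jet correction, $\gamma-\Gamma_{1}$ is real analytic near $s_{0}$ and vanishes there together with its first and second derivatives, but nothing more: its Taylor expansion reads $\gamma(s)-\Gamma_{1}(s)=c_{3}(s-s_{0})^{3}+O(|s-s_{0}|^{4})$, and $c_{3}=(\gamma-\Gamma_{1})'''(s_{0})/6$ is not controlled by the $C^{2}$-smallness of the approximation error (your corrector only addresses the derivatives up to order two). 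If $c_{3}\neq 0$ and points outward, then $\phi(\gamma(s))\approx 2\,\mathrm{Re}\bigl(c_{3}(s-s_{0})^{3}\bigr)-(M-C)c\,|s-s_{0}|^{4}$ is $>0$ for $s$ on one side of $s_{0}$, so $\gamma$ exits $D$, and symmetrically the candidate outer disk $D_{2}$ fails to contain $D$ near $p$. Matching only the $2$-jet is therefore genuinely insufficient; one would need to match at least the $4$-jet \emph{and} control the approximation in $C^{4}$ (and then still estimate the remainder against the quartic margin), which your corrector does not do.

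The paper avoids this issue entirely by never gluing or approximating. Its Lemma~\ref{defplane} furnishes a polynomial biholomorphism $\psi(z)=z+O(|z|^{3})$ so that $\psi(D)$ has, near the origin, a defining function of the exact form $2\mathrm{Re}(z)+\kappa|z|^{2}+a|z|^{4}+o(|z|^{4})$, and then one takes the sublevel sets $\{2\mathrm{Re}(z)+\kappa|z|^{2}+a'|z|^{4}<0\}$ with $a'>a$ (resp.\ $a'<a$) as the comparison domains. These are already globally real\-algebraic topological disks with curvature $\kappa$ at the origin, sitting inside (resp.\ outside) $\psi(D)$ near $p$ with a clean quartic margin $(a'-a-o(1))|z|^{4}$, so the delicate fourth\-order contact at $p$ is built in from the start and never disturbed by an approximation step.
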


This follows from the following lemma. 

\begin{lemma}\label{defplane}
Let 
\begin{equation}
\rho(z)=2\mathrm{Re}(z) + \mathrm{h.o.t.}
\end{equation}
be a real valued function of class $C^4$ near
the origin in $\mathbb C$, and let $D=\{z:\rho(z)<0\}$.
Then for any $a\in\mathbb R$ there exists a holomorphic map 
\begin{equation}
\psi(z)=z + O(|z|^3),
\end{equation}
such that $D'=\psi(D)$ has a defining function 
\begin{equation}
\phi(z)=2\mathrm{Re}(z) + \kappa|z|^2 + a|z|^4 + o(|z|^4),
\end{equation}
where $\kappa$ is the curvature of $bD$ at the origin.
\end{lemma}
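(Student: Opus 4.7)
My plan is a direct degree-by-degree normal-form computation. First I Taylor-expand the given defining function as
\[
\rho(z) = 2\mathrm{Re}(z) + \kappa|z|^2 + R_3(z,\bar z) + R_4(z,\bar z) + o(|z|^4),
\]
with $R_k$ a real-valued homogeneous polynomial of degree $k$ in $(z,\bar z)$. The goal is to find a polynomial biholomorphism of the form $\psi(z) = z + b_3 z^3 + b_4 z^4$ together with a real polynomial local unit $v(z) = 1 + v_2(z,\bar z) + v_3(z,\bar z)$ (with $v_k$ real and homogeneous of degree $k$) such that
\[
\phi(\psi(z)) - v(z)\rho(z) = o(|z|^4), \qquad \phi(w) := 2\mathrm{Re}(w) + \kappa|w|^2 + a|w|^4.
\]
Once this is arranged, I set $\tilde\phi(w) := v(\psi^{-1}(w))\,\rho(\psi^{-1}(w))$; since $v(0)=1>0$, this is a local defining function for $\psi(D)$ near the origin, and by construction $\tilde\phi(w) = \phi(w) + o(|w|^4)$, which is the required form.

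Next I match Taylor coefficients degree by degree. Degrees 1 and 2 match automatically, the latter forcing the linear part $v_1$ of $v$ to vanish. Writing $v_2 = c_2 z^2 + c_{11}|z|^2 + \bar c_2 \bar z^2$ with $c_{11}\in\mathbb R$, the four degree-3 coefficient equations collapse by conjugate symmetry to a pair of complex equations in the unknowns $b_3$ and $c_2$; these are solved uniquely in terms of $c_{11}$, which I keep as a free real parameter. Analogously at degree 4, writing $v_3 = d_3 z^3 + d_{21} z^2\bar z + \bar d_{21} z\bar z^2 + \bar d_3 \bar z^3$, the equations for the $z^4$ and $z^3\bar z$ coefficients (and their conjugates) determine $d_3$ as a function of $b_4$ and then $d_{21}$ in terms of the remaining data, while the equation fixing the $|z|^4$ coefficient reduces to a single real relation of the form
\[
\kappa c_{11} - 2\mathrm{Re}(b_4) = a + P(\rho),
\]
where $P(\rho)$ is an explicit expression in the Taylor coefficients of $\rho$.

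The main obstacle I anticipate is precisely this last real equation: one must ensure it is solvable for every target $a\in\mathbb R$, including when $\kappa=0$. The saving observation is that the coefficient of $\mathrm{Re}(b_4)$ is $-2\neq 0$; geometrically, the term $b_4 z^4$ in $\psi$, although invisible at degree 3, shifts the degree-4 coefficients of $\phi(\psi(z))$ and propagates through $d_3$ into $d_{21}$, thereby entering the $|z|^4$ equation via the $v_3\cdot 2\mathrm{Re}(z)$ contribution. Choosing for instance $c_{11}=0$ and solving for $\mathrm{Re}(b_4)$ completes the construction of $\psi$ and $v$ as explicit polynomials, and the $o(|w|^4)$ remainder then follows from Taylor's theorem applied to the $C^4$ function $\rho$.
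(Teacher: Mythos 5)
Your proposal is correct and is essentially the paper's argument, just organized as a simultaneous linear system at each degree rather than the paper's iterative elimination (alternately multiply by a real unit and compose with a degree-$k$ monomial change of coordinates). One small inaccuracy: you start from the expansion $\rho = 2\mathrm{Re}(z) + \kappa|z|^2 + R_3 + R_4 + o(|z|^4)$, which already assumes the quadratic part has no $z^2,\bar z^2$ terms; the hypothesis only gives $\rho = 2\mathrm{Re}(z) + \mathrm{h.o.t.}$, so $v_1$ is in general nonzero and must be chosen as $v_1 = -a_2 z - \bar a_2\bar z$ (the paper achieves the same normalization by multiplying $\rho$ by $(1-az-\bar a\bar z)$ as the first step of its proof). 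Beyond that your bookkeeping is correct — in particular the observation that $\mathrm{Re}(b_4)$ enters the $|z|^4$ equation with coefficient $-2$ via $d_3\mapsto d_{21}$ is the right reason the target coefficient $a$ can always be realized, including when $\kappa=0$; this corresponds to the final chain ``multiply by $(1 - z\bar z^2 - \bar z z^2)$, then by $(1-z^3-\bar z^3)$, then change coordinates by $z\mapsto z+z^4$'' in the paper.
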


\begin{proof}
We have that
\begin{equation}
\rho(z)=z + \overline z + az^2 + \overline a\overline z^2 + b|z|^2 + O(|z|^3). 
\end{equation}
First we set 
\begin{equation}
\phi_1(z)=(1-az - \overline a\overline z)\rho(z)= z+\overline z + (b-2\mathrm{Re}(a))|z|^2 + O(|z|^3), 
\end{equation}
which we express as
\begin{equation}
\phi_1(z)=(1-az - \overline a\overline z)\rho(z)= z+\overline z + \kappa |z|^2 + P_3(z) + O(|z|^4). 
\end{equation}
where $P_3(z)=\sum_{\alpha+\beta=3}a_{\alpha\beta}z^\alpha\overline z^\beta$.  (Since $\phi_1$ is real we have that $a_{\alpha\beta}=\overline a_{\beta\alpha}$.)   If we have that 
\begin{equation}\label{pure}
P_3(z)=az^3 + \overline a\overline z^3, 
\end{equation}\label{pure}
we may set $\psi_1(z)=z - az^3$, and we get that $\phi_1(\psi_1(z))$ has 
no terms of degree three.   If $P_3$ contains a term $az\overline z^2 + \overline a\overline zz^2$
we set 
\begin{equation}
\phi_2(z)=(1-\overline az^2 - a\overline z^2)\phi_1(z),
\end{equation}
and we get that the degree three part of $\phi_2$ only has a term of the previous type.  
So we may assume that 
\begin{equation}
\phi_2(z)= z + \overline z + \kappa |z|^2 + P_4(z) + o(|z|^4).
\end{equation}
Now $P_4$ contains terms of type $z^\alpha\overline z^\beta + \overline z^\alpha z^\beta$ with $\alpha+\beta=4$.
By multiplying $\phi_2$ by a function of type $(1-z\overline z^2-\overline z z^2)$ we 
may produce the coefficient $a$ in front of the $|z|^4$-term, but we 
but we create a term
of type $z\overline z^3 + \overline zz^3$.  This term can be removed by multiplying with 
a function of type $1-z^3 - \overline z^3$, but we create a term of type $z^4 + \overline z^4$.
This can be removed by a coordinate change of type $\psi_2(z)= z + z^4$.
\end{proof}

\subsection{Proof of Theorem \ref{optimal-plane}}

It remains to prove the upper bound.   For this we will need an interpretation of the quotient $\frac{S_X(z)}{K_X(z)}$
on a Riemann surface $X$.  Fix $z\in X$, and let $[\gamma]$ be an element of $\pi_1(X)$ with base point $z$.
Then the class $[\gamma]$ has a unique element of least Kobayashi length; it has to be a straight line when lifted 
to the unit disk via the universal covering map.   We let $\Gamma_z(X)$ denote the collection of representatives 
of elements in $\pi_1(X)$ with base point $z$ with least length. 

\begin{lemma}
Let $X$ be a Riemann surface which is hyperbolic in the sense of Ahlfors.  Then 
\begin{equation}
\frac{S(z)}{K(z)}=\underset{\gamma\in\Gamma_z(X)}{\Pi}\frac{e^{2l_K(\gamma)}-1}{e^{2l_K(\gamma)}+1},
\end{equation}
where $l_K(\gamma)$ denotes the Kobayashi length of $\gamma$.
\end{lemma}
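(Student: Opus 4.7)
The plan is to work on the universal cover $\pi : \Delta \to X$, normalized so that $\pi(0) = z$, and translate both quantities into data about the orbit $\Gamma \cdot 0 \subset \Delta$, where $\Gamma$ is the Fuchsian group of deck transformations. Ahlfors hyperbolicity of $X$ is precisely the condition that the Blaschke product over this orbit converges and represents the Green's function of $X$. Non-trivial deck transformations are in bijection with non-trivial elements of $\pi_1(X,z)$, and the minimal representative in $\Gamma_z(X)$ of the class of $\gamma \in \Gamma$ is the $\pi$-image of the hyperbolic geodesic segment from $0$ to $\gamma(0)$; consequently $l_K(\gamma) = d_{\Delta}(0, \gamma(0)) = \tanh^{-1}|\gamma(0)|$, which I would rearrange as
\begin{equation*}
|\gamma(0)| \; = \; \frac{e^{2l_K(\gamma)} - 1}{e^{2l_K(\gamma)} + 1}.
\end{equation*}

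I would then read off the two metrics at $z$ from the cover. The standard lifting/contraction argument for the Kobayashi metric under $\pi$ gives $K_X(z) = 1 / |\pi'(0)|$ in the local coordinate on $X$ induced by $\pi$. For the Suita metric I pull $G_z$ back by $\pi$: the result is a negative subharmonic function on $\Delta$ with logarithmic poles exactly at $\Gamma \cdot 0$. Extremality of the Green's function among such functions, combined with Ahlfors hyperbolicity, identifies the pullback with the convergent Blaschke series
\begin{equation*}
G_z(\pi(\zeta)) \; = \; \sum_{\gamma \in \Gamma} \log \Bigl| \frac{\gamma(0) - \zeta}{1 - \overline{\gamma(0)}\,\zeta} \Bigr|.
\end{equation*}

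Expanding near $\zeta = 0$ and using $\pi(\zeta) - z = \pi'(0)\zeta + O(\zeta^2)$, both sides have singular part $\log|\zeta|$; matching the constants in $G_z(w) = \log|w - z| + h_z(w) + o(1)$ yields
\begin{equation*}
h_z(z) \; = \; - \log|\pi'(0)| + \sum_{\gamma \neq \mathrm{id}} \log|\gamma(0)|,
\end{equation*}
so that $c_\beta(z) = |\pi'(0)|^{-1}\prod_{\gamma \neq \mathrm{id}} |\gamma(0)|$ and hence
\begin{equation*}
\frac{S(z)}{K(z)} \; = \; |\pi'(0)| \cdot c_\beta(z) \; = \; \prod_{\gamma \neq \mathrm{id}} |\gamma(0)|.
\end{equation*}
Substituting the length identity from the first paragraph produces the claimed product formula.

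The genuinely analytic step is the Blaschke representation of $G_z$, which is really a restatement of Ahlfors hyperbolicity; once admitted, everything else reduces to matching coefficients in two convergent expansions. The one bookkeeping point to verify is that the bijection $\Gamma \setminus \{\mathrm{id}\} \leftrightarrow \pi_1(X,z) \setminus \{1\}$ is compatible with the minimal-length choice, i.e.\ that the Kobayashi-shortest loop in the class of $\gamma$ is the projection of the hyperbolic segment $[0, \gamma(0)]$; this is immediate from the fact that $\pi$ is a local isometry for the Poincar\'e / Kobayashi metrics.
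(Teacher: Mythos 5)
Your proof is correct and follows essentially the same route as the paper: pass to the universal cover $\pi:\triangle\to X$ with $\pi(0)=z$, invoke Myrberg's Blaschke-product representation of the pulled-back Green's function to read off the Suita metric, and convert the Kobayashi length of the minimal loop to $|\phi(0)|$ via $l_K=\tanh^{-1}|\phi(0)|$. The only cosmetic difference is that you track $|\pi'(0)|$ explicitly in a coordinate on $X$ and let it cancel in the ratio, whereas the paper works directly in the coordinate $\zeta$ on $\triangle$ where the Kobayashi density is $1$ and that factor never appears.
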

\begin{proof}
We fix a point $z\in X$, and we let $\pi:\triangle\rightarrow X$ be a universal covering map with $\pi(0)=z$.
We calculate the quotient in the local coordinates given by $\pi$.  In these coordinates the Kobayashi metric 
is equal to one.  By Myrberg's theorem (see \cite{Tsuji}, Theorem XI. 13.) the pullback of the Green's function with pole at $z$ is given by
\begin{equation}
G(\zeta)=\sum_{\phi}\log|\frac{\zeta-\phi(0)}{1-\overline{\phi(0)}\zeta}|
\end{equation}
where $\phi$ runs through the Deck-group.   So 
\begin{equation}
(G(z)-\log|z|)(0)=\sum_{\phi\neq\mathrm{id}}\log|\phi(0)|,
\end{equation}
and so the Suita metric at the origin is given by 
\begin{equation}
c_\beta(0)=\Pi_{\phi\neq\mathrm{id}}|\phi(0)|.
\end{equation}
The straight line segment between $0$ and $\phi(0)$ gives the shortest curve in the corresponding 
homotopy class, and its Kobayashi length is $\frac{1}{2}\log\frac{1+|\phi(0)|}{1-|\phi(0)|}$.
\end{proof}

To prove the upper bound we now fix a point $a$ close to a boundary point $p$.  The plan is for 
each $z$ on the straight line segment between $a$ and $p$, to chose 
elements $\gamma_{z,j}$ of the fundamental group of $D$ with base point $z$, and 
show that  
\begin{equation}
\underset{j}{\Pi}\frac{e^{2l_K(\gamma_{z,j})}-1}{e^{2l_K(\gamma_{z,j})}+1} =\underset{j}{\Pi} (1 - e^{-2l_K(\gamma_{z,j})}\frac{2}{1+ e^{-2l_K(\gamma_{z,j})}}) \leq 1 - C\delta^2(z)
\end{equation}
for $C>0$.
According to the lemma, these products are all larger than our quotient.  \

Fix all $\gamma_j\in\Gamma_a$.  Then $\gamma_{j,z}$ is defined by composing the straight path 
between $z$ and $a$, the loop $\gamma_j$ and the the straight path between $a$ and $z$.
The length of $\gamma_{j,z}$ is then the sum of the length of $\gamma_j$ and twice the 
length of the straight line segment between $a$ and $z$.  The length of the line segment between 
$a$ and $z$ is less than $\frac{1}{2}\log\frac{1}{\delta(z)} + \tilde C$.  So the total length of $\gamma_{j,z}$
is less than $l_K(\gamma_j)+\log\frac{1}{\delta(z)} + \tilde C$.   Write $a_j=e^{-2l_K(\gamma_j)}$.
So each factor in the 
product above is less than 
\begin{equation}
1 - a_j\delta^2(z)\tilde C,
\end{equation}
where the constant has changed.  Now 
\begin{equation}
-\log (1-y)=y(1+O(|y|)),
\end{equation}
and so 
\begin{equation*}
\frac{-\log (1-a_j\delta(z)^2\tilde C)}{-\log (1-a_j)}\geq \delta^2(z)\tilde C'\Rightarrow \log (1-a_j\delta(z)^2\tilde C)\leq \log (1-a_j)\delta^2(z)C',
\end{equation*}
and so 
\begin{equation}
\sum_j\log (1 - a_j\delta^2(z)\tilde C)\leq \delta^2(z)C'\sum_j a_j = \delta^2(z)(-b),
\end{equation}
for some constant $b>0$.  So 
\begin{equation}
\frac{S(z)}{K(z)}\leq e^{-b\delta^2(z)} = 1 - b\delta^2(z) + O(|b\delta^2(z)|^2)\leq 1 - C\delta^2(z).
\end{equation}
$\hfill\square$

\section{Several Complex Variables}

To prove Theorem \ref{main-localise} we are going to embed $\Omega$ onto a suitable domain $\tilde\Omega$, which is squeezed 
between a product domain (on the outside) and  a disk $D_\lambda$ (on the inside), and these three sets will all contain 
the image of $p$ in their (relative) boundaries, and have the same curvature $\kappa$ at that point; the presise statement 
is in Theorem \ref{expose} below.  The decreasing property of invariant metrics along 
with the one variable result will then enable us to prove Theorem \ref{main-localise}; this 
will be done in subsection \ref{ss1}.  To prove Theorem \ref{expose} we will need to 
find local coordinates near $p$ in which the boundary has an especially nice defining function; this
will done in subsection \ref{ss2}.  The embedding result will be proved in subsection \ref{ss3}.

\subsection{Proof of Theorem \ref{main-localise}}\label{ss1}

Instead of estimating the difference directly, we will estimate the quotient of the two metrics.  
We may  assume that $p=0$ and that we have  a defining function 
\begin{equation}
\phi(z)=2\mathrm{Re}(z_1) + \mathrm{h.o.t.}
\end{equation}
for $b\Omega$ near the origin.  Let $\psi$ be the map from Theorem \ref{expose} below.  
Set $p_\delta=-\delta\tilde\eta_p, q_\delta=\psi(p_\delta)$, 
$\tau_\delta=\psi_*\tilde\eta_\delta$ and $\tilde\tau_\delta=\tau_\delta/\|\tau_\delta\|$.
 
\begin{align*}
\frac{F_{\Omega\cap L_p}(p_\delta,\tilde\eta_\delta)}{F_{\Omega}(p_\delta,\tilde\eta_\delta)} & \leq \frac{F_{D_\lambda}(p_\delta,\tilde\eta_\delta)}{F_{\Omega}(p_\delta,\tilde\eta_\delta)}\\
&  = \frac{ F_{\psi(D_\lambda)}(q_\delta,\tau_\delta)}{F_{\psi(\Omega)}(q_\delta,\tau_\delta)}\\
& = \frac{ F_{\psi(D_\lambda)}(q_\delta,\tilde\tau_\delta)}{F_{\psi(\Omega)}(q_\delta,\tilde\tau_\delta)}\\
& \leq \frac{ F_{\tilde D_\lambda}(\pi_1(q_\delta),\pi_{1,*}\tilde\tau_\delta)}{F_{\tilde D\times\mathbb C^{n-1}}(q_\delta,\tilde\tau_\delta)}\\
&  \leq \frac{ F_{\tilde D_\lambda}(\pi_1(q_\delta),\pi_{1,*}\tilde\tau_\delta)}{F_{\tilde D}(\pi_1(q_\delta),\pi_{1,*}\tilde\tau_\delta)}\\
& = \frac{\frac{1}{2(\delta + O(\delta^2))}+\frac{\kappa}{4} + O_1(\delta) }{\frac{1}{2(\delta + O(\delta^2))}+\frac{\kappa}{4} + O_2(\delta)}\\
& = 1+O_3(\delta^2).
\end{align*}

We now get that 
\begin{align*}
F_{\Omega\cap L_p} - F_\Omega = F_\Omega\cdot (\frac{F_{\Omega\cap L_p}}{F_\Omega} - 1)=F_\Omega\cdot O(\delta^2)=O(\delta).
\end{align*}

%
%We may replace $F^1$ by the Caratheodory metric and $F^2$ by the Kobayashi metric. 
%Write $p_\delta=p-\delta\tilde\eta_p, q_\delta=\psi(p_\delta)$ and $\tau_\delta=\psi_{\delta,*}\tilde\eta_p/\|\psi_{\delta,*}\tilde\eta_p\|$.
%We get that 
%\begin{align*}
%\frac{C_\Omega(p_\delta,\tilde\eta_p)}{K_\Omega(p_\delta,\tilde\eta_p)} & = \frac{C_{\psi(\Omega)}(q_\delta,\tau_\delta)}{K_{\psi(\Omega)}(q_\delta,\tau_\delta)} \geq
%\frac{C_{\tilde D\times\mathbb C^{n-1}}(q_\delta,\tau_\delta)}{K_{\psi(D_\delta)}(q_\delta,\tau_\delta)}
%\end{align*}
%It is now enough to prove the estimate for the last quotient since the boundary distance of $q_\delta$ is comparable to $\delta$.   
%Write $q_\delta=(z_1,g(z_1))$.  If we let $\pi$ denote the projection onto the first coordinate we have that $|\pi_*\tau_\delta|=1+O(|z_1|^2)$
%and so 
%\begin{equation}
%C_{\tilde D\times\mathbb C^{n-1}}(q_\delta,\tau_\delta) = C_{\tilde D}(z_1)(1+O(|z_1|^2)),
%\end{equation}
%and 
%\begin{equation}
%K_{\psi(D_\delta)}(q_\delta,\tau_\delta) = K_{\tilde D_\delta}(z_1)(1+O(|z_1|^2)),
%\end{equation}
%and so the estimate follows from the one variable result Theorem \ref{plane-localise}.

\subsection{Local defining functions in SCV}\label{ss2}

\begin{lemma}\label{locdefscv}
Let 
\begin{equation}
\rho(z)=2\mathrm{Re}(z_1) + \mathrm{h.o.t.}
\end{equation}
be a real valued function of class $C^4$ near
the origin in $\mathbb C^n$, and assume that the hypersurface $\Sigma=\{z:\rho(z)=0\}$ is 
strictly pseudoconvex.  Then there exists an injective holomorphic map $G$ near the 
origin and a real valued function $\phi$ of class $C^4$ such that 
\begin{itemize}
\item[(i)] $G(z)=(z_1,Az') + O(\|z\|^2)$
\item[(ii)] $G(z_1,0)=(z_1 + O(|z_1|^3),0)$, 
\end{itemize}
and the hypersurface $\Sigma'=G(\Sigma)$ is defined by $\Sigma'=\{z:\phi(z)=0\}$, with
\begin{equation}
\phi(z)=2\mathrm{Re}(z_1) + \kappa |z_1|^2 + |z'|^2 + \tau\|z\|^4 + O(|z'|^2)O(\|z\|)+o(\|z\|^4)
\end{equation}
with $\tau>0$.
\end{lemma}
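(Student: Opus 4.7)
The plan is to adapt the one-variable normalization of Lemma \ref{defplane} to $\mathbb{C}^n$, iteratively using two operations that are compatible with the constraints (i) and (ii): multiplication of the defining function by a real unit $1 - P(z,\bar z)$ (which does not touch coordinates and hence does not affect $G$), and composition with a holomorphic change of coordinates whose first component is the identity plus $O(|z_1|^3)$ on the slice $\{z' = 0\}$ and whose $z'$-component vanishes on that slice. The final $G$ will be a composition of such changes.

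First I would apply a linear change $G_1(z) = (z_1, A z')$ with $A \in GL(n-1,\mathbb{C})$ chosen to diagonalize the restriction of the Levi form of $\rho$ at the origin to the identity on the complex tangent space $\{z_1 = 0\}$; this is possible because $\Sigma$ is strictly pseudoconvex, and $G_1$ trivially satisfies (i) and (ii). Second, exactly as in Lemma \ref{defplane}, I would multiply the defining function by successive real units of the form $1 - P_k(z,\bar z)$ with $P_k$ a real polynomial of degree $k = 1, 2, 3$, so as to eliminate all pluriharmonic components of orders $2$, $3$ and $4$ in $\rho$ (both the pure $z_1$-terms and those involving $z'$) and to cancel mixed Hermitian cross terms such as $\mathrm{Re}(z_1\bar z_j')$ at the quadratic level. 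This step alters only the defining function, leaving $G$ unchanged. Finally, compose with $G_2(z) = (\psi_1(z_1), z')$, where $\psi_1(z_1) = z_1 + O(|z_1|^3)$ is produced by applying Lemma \ref{defplane} to the planar slice $\{z' = 0\}$ with any prescribed $\tau > 0$; this forces $\phi|_{\{z' = 0\}} = 2\mathrm{Re}(z_1) + \kappa|z_1|^2 + \tau|z_1|^4 + o(|z_1|^4)$, while visibly preserving both (i) and (ii).

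What remains is to verify that every surviving non-pluriharmonic mixed term of degree $3$ or $4$ in $\phi$ lies either in $O(|z'|^2)O(\|z\|)$ (which absorbs terms carrying at least two $z'$/$\bar z'$ factors) or in $o(\|z\|^4)$, or else can be killed by one more round of real-unit multiplication that does not disturb the pluriharmonic structure already achieved. The main obstacle is precisely this last check: certain Hermitian cross terms with exactly one $z'$/$\bar z'$ factor, e.g.\ $c z_1^2 \bar z_j' + \bar c \bar z_1^2 z_j'$ and $\mathrm{Re}(c|z_1|^2 z_j')$, cannot be removed by any holomorphic change compatible with (ii), and each quadratic real unit that kills one such term tends to reintroduce another. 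One has to set up the linear map from admissible unit coefficients to coefficient changes, solve it modulo the error bucket $O(|z'|^2)O(\|z\|) + o(\|z\|^4)$, and iterate in the spirit of the inductive killing in Lemma \ref{defplane}. The positivity $\tau > 0$ requires no separate argument, being a free parameter furnished by Lemma \ref{defplane}.
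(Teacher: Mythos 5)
Your broad strategy --- diagonalize the Levi form, normalize the $z_1$-slice via Lemma \ref{defplane}, then kill the remaining mixed terms by alternating multiplications by real units with holomorphic changes compatible with (i) and (ii) --- is in fact the one the paper takes. But the proposal stops exactly where the work is, and the way you phrase the difficulty suggests you have not actually seen how to get past it.

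Two remarks. First, the second paragraph overstates what unit multiplications alone can do: multiplying by $1-q$ changes $\phi$ by $-(z_1+\bar z_1)q + \mathrm{h.o.t.}$, so every term introduced or removed carries a $z_1$ or $\bar z_1$ factor. Pure pluriharmonic terms of orders $3$ and $4$ (e.g.\ $z_1^3+\bar z_1^3$, or the terms $2\mathrm{Re}\big(\sum a_{ij}z_iz_j\big)$ at order $2$, or the pure holomorphic $z_jz_1^k$-type terms at orders $3,4$) still require coordinate changes $z_1\mapsto z_1 - f(z)$ with $f$ holomorphic; the units only reshuffle within a degree.

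Second, and this is the genuine gap: you observe that a term such as $c\,z_1^2\bar z_j + \bar c\,\bar z_1^2 z_j$ (type $(k,l)=(0,2)$ in the notation $a z_j z_1^k\bar z_1^l + \bar a\bar z_j\bar z_1^k z_1^l$) cannot be removed by a holomorphic change compatible with (ii), and that each real unit which kills one such term introduces another, and then gesture at a linear-algebra/iteration argument without showing it terminates rather than cycles. The paper resolves exactly this with a monotone descent in $l$: multiplying by $1 - \big(a z_j z_1^k\bar z_1^{l-1} + \bar a\bar z_j\bar z_1^k z_1^{l-1}\big)$ kills the $(k,l)$ term and produces precisely one new term of type $(k+1,l-1)$; starting from $l=2$ (resp.\ $l=3$ for degree $4$) one walks $l$ down to $0$, at which point the term $a z_j z_1^{k+l}+\bar a\bar z_j\bar z_1^{k+l}$ is pure holomorphic and is killed by the coordinate change $z_1\mapsto z_1 - a z_j z_1^{k+l}$, which is the identity on $\{z'=0\}$ and hence compatible with (ii). This explicit well-founded ordering is what replaces your ``iterate in the spirit of Lemma \ref{defplane}''. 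You also omit the paper's final coordinate change $z_j\mapsto z_j+\tilde\tau_j z_j^2$, $j\geq 2$, which assembles the $|z_j|^4$ contributions into the stated $\tau\|z\|^4$ form.
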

\begin{proof}
By Lemma \ref{defplane} we may assume that 
\begin{equation}
\phi(z_1,0)=z_1+\overline z_1 + \kappa |z_1|^2 + |z_1|^4 + o(|z_1|^4),
\end{equation}
and by a linear change of coordinates in the $z'$-variables we may diagonalise
the complex Hessian in these variables.  
So we may assume that 
\begin{align*}
\phi(z) & = z_1+\overline z_1 + \kappa |z_1|^2 + |z_1|^4 + \|z'\|^2\\
& + 2\mathrm{Re}(\sum_{ij} a_{ij}z_iz_j) + \sum_{j\geq 2}b_jz_1\overline z_j + \overline{b_j}\overline z_1 z_j\\
& + O(\|z\|^3),
\end{align*}
with no pure $z_1$-terms of degree three or four in the remainder, and $a_{11}=0$.   Multiplying $\phi$
by the function 
\begin{equation}
1 - \sum_{j\geq 2} b_j \overline z_j +\overline b_j z_j,
\end{equation}
we eliminate the terms $\sum_{j\geq 2}b_jz_1\overline z_j + \overline{b_j}\overline z_1 z_j$  but change 
the expression $2\mathrm{Re}(\sum_{ij} a_{ij}z_iz_j)$.  Applying a coordinate change 
\begin{equation}
\psi(z)=(z_1 - \sum_{ij} \tilde a_{ij}z_iz_j),z'), \tilde a_{11}=0,
\end{equation}
we get a defining function 
\begin{equation}
\phi(z)  = z_1+\overline z_1 + \kappa |z_1|^2 + |z_1|^4 + \|z'\|^2 + O(\|z\|^3).
\end{equation}
We proceed to remove unwanted terms of degree three; these are terms of 
degree one in the $z_j$-variables for $j=2,3,...,n$, \emph{i.e.}, terms of the form
\begin{equation}\label{terms}
a z_j z_1^k\overline z_1^l + \overline a \overline z_j \overline z_1^k z_1^l,
\end{equation}
with $k+l=2$.   We start by removing the terms where $l=2$; for each $j$ this is done by multiplying $\phi$
by 
\begin{equation}\label{removemult}
1 - (az_jz_1^{k}\overline z_1^{l-1} + \overline a\overline z_j\overline z_1^{k}z^{l-1}),
\end{equation}
and while we remove the term we want, we create a term \eqref{terms} with $k=l=1$.  This 
can in turn be removed by multiplying by a function \eqref{removemult} with $l=0$, 
and we are now left with a term \eqref{terms} with $k=2,l=0$.  This term is removed by 
a change of variables 
\begin{equation}\label{varchange}
\psi(z)=(z_1 - az_jz_1^k,z_2,\cdot\cdot\cdot,z_n),
\end{equation}
and only unwanted terms of degree at least four is created.  \

Following the same scheme, we may proceed to remove all terms \eqref{terms}
with $k+l=4$.  For each $j$, multiplying by a function \eqref{removemult} we remove a term
of type $(k,l)$, and we create a term of type $(k+1,l-1)$.  So we may proceed until 
we are left with a term of type $(k+l,0)$ which can be removed by a coordinate
change \eqref{varchange}. \

We now have a defining function 
\begin{equation}
\phi(z)=2\mathrm{Re}(z_1) + \kappa |z_1|^2 + \|z'\|^2 + |z_1|^4 + O(|z'|^2)O(\|z\|)+o(\|z\|^4).
\end{equation}
Applying the coordinate change 
\begin{equation}
\psi(z)=(z_1,z_2+\tilde\tau_2 z_2^2,...,z_n+\tilde\tau_n z_n^2),
\end{equation}
the expression $\sum_{j=1}^n |z_j|^4=(\sum_{j=1}^n |z_j|^2)^2 + O(\|z'\|^2)O(\|z\|)$ appears as 
wanted, and no other unwanted terms are created.  

\end{proof}

\subsection{Exposing a point with control of a defining function}\label{ss3}
We will consider domains $\Omega\subset\mathbb C^n$ with $0\in b\Omega$, 
at which the outward pointing normal is the $\mathrm{Re}(z_1)$-axis.  Recall that 
in this setting we have denoted the $z_1$-line by $L_0$.  For $\lambda>0$
we denote by $D_\lambda$ the intersection $\Omega\cap L_0\cap\mathbb B^n_\lambda$.

\begin{theorem}\label{expose}
Let $\Omega\subset\mathbb C^n$ be a domain such that $\overline\Omega$ is a Stein compact, 
Assume that $0\in b\Omega$ is a strictly pseudoconvex boundary point of class $C^4$, 
with a defining function 
\begin{equation}
\phi(z)=2\mathrm{Re}(z_1) + \mathrm{h.o.t.}
\end{equation}
near the origin. 
Then there exist a planar 
domain $\tilde D\subset L:=L_0$ and a holomorphic
embedding $\psi:\overline\Omega\rightarrow \overline {\tilde D}\times\mathbb C^{n-1}$ such 
that the following hold
\begin{itemize}
\item[(i)] $\psi(0)=0\in b_1\tilde D$ (where $b_1$ means the boundary relative to $L$), 
\item[(ii)] $\psi(z_1,0)=(z_1+O(|z_1|^3),O(|z_1|^2))$, and 
\item[(iii)] for $\lambda$ small enough there exists $\tilde D_\lambda\subset\tilde D$ such that 
$0\in b_1\tilde D_\lambda$, $\kappa:=\kappa_{\tilde D_\lambda}(0)=\kappa_{\tilde D}(0)$, and $\psi(D_\lambda)$
is a graph $(z_1,g(z_1))$ over $\tilde D_\lambda$, with $g(z_1)=O(|z_1|^2)$,
\end{itemize}
Also, the function $g$ is defined 
in an open neighbourhood of the origin in $L$.
\end{theorem}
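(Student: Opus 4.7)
The plan is to combine the local normal form given by Lemma \ref{locdefscv} with a globalisation argument using the Stein compact hypothesis. The first coordinate $\psi_1$ will be a global holomorphic support function at $0$ agreeing to high order with $z_1$ in the good local coordinates, while the remaining coordinates are taken close to the identity so that $\psi$ is an injective holomorphic map.

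First I would apply Lemma \ref{locdefscv} to obtain an injective holomorphic map $G$ on a neighbourhood $V$ of the origin such that $G(\Omega \cap V)$ has defining function $\phi(z) = 2\mathrm{Re}(z_1) + \kappa|z_1|^2 + \|z'\|^2 + \tau\|z\|^4 + O(|z'|^2)O(\|z\|) + o(\|z\|^4)$ with $\tau > 0$, and $G(z_1, 0) = (z_1 + O(|z_1|^3), 0)$. Restricting this defining function to $\{z' = 0\}$ gives $2\mathrm{Re}(z_1) + \kappa|z_1|^2 + \tau|z_1|^4 + o(|z_1|^4)$, so $G(D_\lambda)$ is a planar domain in $L_0$ with $0$ on its boundary of curvature exactly $\kappa$. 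Because the $\|z'\|^2$ term dominates the cross terms for small $\|z\|$, the infimum of $\phi(z_1, \cdot)$ over $z'$ is attained at $z' = 0$ for small $z_1$, so the local first-coordinate projection $\pi_1(G(\Omega \cap V))$ is itself a smooth planar region with the same boundary curve of curvature $\kappa$ at $0$. This will provide the local structure of $\tilde D$ and of $\tilde D_\lambda$ for small $\lambda$.

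Next I would globalise $G$ to an injective holomorphic $\psi: \overline\Omega \to \mathbb{C}^n$. Using the Stein compact hypothesis, Cartan / Oka--Weil approximation allows one to extend $G$ from $V$ to a global holomorphic map on a Stein neighbourhood of $\overline\Omega$, prescribing the jet of $\psi$ at $0$ to agree with $G$ to arbitrarily high order and keeping $\psi$ close to the identity away from $V$; for sufficiently small perturbations of the identity on the compact set $\overline\Omega$ the map remains injective. To arrange $0 \in b_1 \tilde D$, the first coordinate $\psi_1$ must not vanish on $\Omega$. This is secured by further modifying $\psi_1$ by subtracting a small multiple of a global holomorphic peak function $h$ at the strictly pseudoconvex boundary point $0$ (which exists by classical theory for Stein compacts), with $h$ vanishing at $0$ to sufficiently high order to preserve the prescribed jet.

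With $\psi$ so constructed, define $\tilde D$ to be a planar domain containing $\pi_1(\psi(\Omega))$ whose boundary near $0$ coincides with the smooth boundary arc of the local region $\pi_1(G(\Omega \cap V))$. Property (i) then holds by construction; (ii) follows from the high-order agreement of $\psi$ with $G$ at $0$ together with Lemma \ref{locdefscv}(ii); and (iii) follows by choosing $\lambda$ so small that $D_\lambda \subset V$, in which case $\psi(D_\lambda) = G(D_\lambda) \subset L_0 \times \{0\}$ is a graph with $g \equiv 0$ over $\tilde D_\lambda := \pi_1(\psi(D_\lambda)) \subset \tilde D$, having the required curvature at $0$. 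The main obstacle is the simultaneous control of injectivity of $\psi$, high-order jet agreement with $G$ at $0$, and the global non-vanishing of $\psi_1$ on $\Omega \setminus \{0\}$: the peak correction must vanish to sufficiently high order at $0$ to preserve the local structure, yet be large enough globally to clear the first coordinate from $0$ on the rest of $\Omega$. Handling this trade-off, together with the jet-prescription variant of the Oka--Weil theorem, is the delicate step.
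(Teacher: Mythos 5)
Your proposal correctly identifies Lemma \ref{locdefscv} as the starting point and correctly flags the final trade-off as the delicate step, but the globalisation strategy you outline cannot close that gap, and the paper's mechanism for closing it is missing entirely from your argument.

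Two concrete problems. First, Cartan/Oka--Weil approximation applies to holomorphic \emph{functions}, not to injective holomorphic maps: an approximation of $G$ on a Stein neighbourhood of $\overline\Omega$ with prescribed jet at $0$ has no reason to remain injective, and since $G$ is not itself close to the identity, a ``small perturbation of the identity'' cannot also agree with $G$ to fourth order at $0$. The paper instead globalises $G$ via Anders\'{e}n--Lempert theory, producing an automorphism $F_1$ of $\mathbb{C}^n$ with $F_1(z)=G(z)+O(\|z\|^5)$; that is the correct tool here. Second, and more seriously, the peak-function correction of $\psi_1$ is doomed by exactly the tension you name. A peak function $h$ at a strictly pseudoconvex boundary point has $h(0)=1$ and $|h|<1$ on $\overline\Omega\setminus\{0\}$, so $1-h$ vanishes at $0$ only to first order; subtracting any nonzero multiple of it destroys the fourth-order jet agreement required for (ii) and (iii). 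If you raise $1-h$ to a power to increase the vanishing order, it becomes uniformly small on $\overline\Omega$ and no longer pushes the global image appreciably. Moreover, nonvanishing of $\psi_1$ is necessary but far from sufficient: the conclusion requires $\pi_1(\psi(\overline\Omega))$ to lie inside a planar domain $\tilde D$ with $0$ on its boundary and curvature $\kappa$ there, which is a genuine global containment statement, not a local one.

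The paper's resolution is the exposing-points construction of \cite{DiederichFornaessWold}. After $F_1$, an automorphism $F_2$ matching the identity to fourth order moves $\overline\Omega$ off the positive real $x_1$-axis $\Gamma$ except at the origin. One then stretches a one-variable neighbourhood $V_1$ of the origin along $\Gamma$ to a distant point $(R,0,\ldots,0)$ by a Riemann map $f_j:V_1\to W_j$, extended to $\gamma_j=(f_j(z_1),z')$. This is a \emph{large} deformation, and Forstneri\v{c}'s splitting lemma (Theorem 4.1 of \cite{Forstneric2003}) is used to factor $\gamma_j=\beta_j\circ\alpha_j^{-1}$ on a Cartan pair $(A,B)$ covering $\overline\Omega''$, with $\alpha_j$ vanishing to order four at the origin. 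Gluing $\gamma_j\circ\alpha_j$ on the local piece with $\beta_j$ on the global piece yields a global injective map that stretches $\overline\Omega$ into a product $\overline{\tilde D}\times\mathbb{C}^{n-1}$ while preserving the fourth-order normal form at the exposed point. This gluing is the essential device that reconciles ``large global move'' with ``high-order local rigidity,'' and it is precisely what your peak-function correction cannot provide.
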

\begin{proof}
The proof is based on that of the exposing of points in \cite{DiederichFornaessWold}, except that 
we need to take extra care of the local geometry at the exposed point.  \

Let $G$ be the map from Lemma \ref{locdefscv} above.  
By Anders\'{e}n-Lempert theory there exists $F_1\in\mathrm{Aut_{hol}}\mathbb C^n$ such that 
$F_1(z)=G(z)+O(\|z\|^5)$.  It follows that $\Omega':=F_1(\Omega)$ has a defining function 
\begin{equation}\label{deffunction}
\phi(z)=2\mathrm{Re}(z_1) + \kappa |z_1|^2 + \|z'\|^2 + \tau \|z\|^4 + O(\|z'\|^2)O(\|z\|)+o(\|z\|^4),
\end{equation}
with $\tau>0$ near the origin.   \

%
%The main point is that $F_1(\Omega)$ projects, locally near the origin, into 
%the disk $D_\kappa$ in the $z_1$-plane.   Hence
%$\Omega'=F_1(\Omega)$ is locally contained in $D_\kappa\times\mathbb C^{n-1}$.  So if the origin was 
%already a globally exposed point, we could choose a domain $D$ equal to $D_\kappa$ near 
%the origin to achieve the conclusion of the theorem.   We proceed to explain how to expose
%the origin, keeping the projection property.  \

We next want to find $F_2\in\mathrm{Aut_{hol}}\mathbb C^n$ 
such that $F_2(\overline\Omega')$ does not intersect the positive real axis $\Gamma=\{z\in\mathbb C^n;x_1\geq 0, x_2=z_2=\cdot\cdot\cdot = z_n=0\}$
except for at the origin, and such that the map $F_2$  matches the identity to order four.   Such a map is furnished by the 
proof of (4) in the proof of Lemma 3.1 in \cite{DiederichFornaessWold}.
So 
the image $\Omega''=F_2(\Omega')$ still has a defining function of the form \eqref{deffunction}.  \

Before we proceed, note that we now have proved a "local" version of the theorem.   Since 
\begin{equation}
\|z'\|^2 + \tau\|z\|^4 + O(\|z'\|^2)O(\|z\|)+o(\|z\|^4)\geq 0,
\end{equation}
we see that, locally near the origin, we have that $\pi_1(\Omega_2)\subset D_\kappa\subset L$, and for small $\lambda$
we could set $\tilde D_\lambda:=\pi_1(F_2(F_1(D_\lambda)))$, and $\tilde D$ a suitable domain which coincides with $D_\kappa$
near the origin.   The final step is to construct a suitabe holomorphic injection that roughly stretches the boundary of $\Omega''$ along $\Gamma$,
to a point $(R,0,...,0)$ with $R>>1$.  \

Let $\epsilon>0$ be small.  For $0<\eta<<\epsilon$
we define 
\begin{equation}
A:= \{z\in\triangle^n_\epsilon: z\in\overline\Omega'', |z_1|\leq 2\eta\} 
\end{equation}
and
\begin{equation}
 B:=\{z\in\triangle^n_\epsilon:z\in\overline\Omega'',|z_1|\geq\eta\}\cup \overline\Omega''\setminus\triangle_\epsilon^n.
\end{equation}
Then $\Omega''=A\cup B$, and we claim that if $\epsilon$ and $\eta=\eta(\epsilon)$ are sufficiently small, then we have that $\overline{(A\setminus B)}\cap\overline{(B\setminus A)}=\emptyset$, 
and the set $C=A\cap B$ is a Stein compact.  
To see this, choose $\epsilon$ small enough such that
\begin{equation}
\Omega''\cap\triangle_\epsilon^n = \{z\in\triangle_\epsilon^n:\phi(z)<0\}.
\end{equation}
Then for small enough $\eta$ we have that $A\subset\triangle_{\epsilon/2}^n$,
because if $|z_1|\leq 2\eta$ and $\epsilon/2 \leq \|z'\|\leq\epsilon$,  then 
\begin{equation}
\phi(z)\geq -4\eta \pm 4\kappa\eta^2 + \epsilon^2/4 + O(\epsilon^3)>0
\end{equation}
for sufficiently small $\eta$.  Then 
\begin{align*}
\overline{(A\setminus B)}\cap\overline{(B\setminus A)} & = \overline{(A\setminus B)}\cap\overline{((B\cap\triangle_\epsilon^n)\cup(\overline\Omega''\setminus\triangle_\epsilon^n))\setminus A}\\
& =  \overline{(A\setminus B)}\cap\overline{(B\cap\triangle_\epsilon^n)\setminus A} = \emptyset.
\end{align*}
Furthermore, we have that 
\begin{equation}
C= \{z\in\overline\triangle^n_\epsilon:  |z_1|\leq 2\eta\}\cap  \{z\in\overline\triangle^n_\epsilon: |z_1|\geq \eta\}\cap\overline\Omega'',
\end{equation}
so $C$ is a Stein compact.  \

Next let $V=\Omega''\cap\triangle_\epsilon^n$.  Choose 
a smoothly bounded simply connected domain $V_1\subset L$ such that $bV_1$ has a defining function

\begin{equation}
2\mathrm{Re}(z_1) + \kappa |z_1|^2
\end{equation}
near the origin,   
such that 
$\overline V_1$ intersects $\Gamma$ only at the origin, and is symmetric with respect to the $x_1$-axis.  Note 
that, locally near the origin, $\pi_1(V)\subset V_1$, where $\pi$ denote the projection onto $L$. \

Next choose $R>0$ such that $\pi_1(\overline\Omega'')\subset\subset\triangle_R\subset L$, and set 
$\Gamma_R:=\Gamma\cap\{|z_1|\leq R\}$.  For large enough  $j$ and small enough $\nu<<1$, 
we let $W_j$ be the simply connected domain
\begin{equation}
W_j=(V_1\cup\Gamma_R(1/j))\setminus\{z_1:2\mathrm{Re}(z_1-R)+\kappa\|z_1-R\|^2\geq 0, |z_1-R|<\nu\},
\end{equation}
where $\Gamma_R(1/j)$ denotes the open $1/j$-neighbourhood of $\Gamma_R$.  Then $W_j$
is symmetric with respect to the $x_1$-axis, and $bW_j$, near $(R,0,...,0)$, is just a translate of $bV_1$ near the origin. \

Fix a point $a$ in $V_1$ intersected with the $x_1$-axis, and 
let $f_j:V_1\rightarrow W_j$ be the Riemann map such that $f_j(a)=a, f_j'(a)>0$.  By symmetry, $f_j$ is real, 
and $f_j$ sends the origin to $(R,0,...,0)$.  Then for any $\mu>0$ we have that 
$f_j\rightarrow\mathrm{id}$ uniformly on $\overline V_1\setminus D_\mu(0)$.  Set $\gamma_j(z)=(f_j(z_1),z_2,...,z_n)$, 
and note that $\gamma_j\rightarrow\mathrm{id}$ on a fixed open neighbourhood of $C=A\cap B$.  \

By Theorem 4.1 in \cite{Forstneric2003} there exist open neighbourhoods $A',B'$ and $C'$ of $A,B$ and $C$ respectively, 
and injective holomorphic maps $\alpha_j:A'\rightarrow\mathbb C^n, \beta_j:B'\rightarrow\mathbb C^n$, such 
that $\alpha_j\rightarrow\mathrm{id}, \beta_j\rightarrow\mathrm{id}$ uniformly as $j\rightarrow\infty$, and 
$\gamma_j=\beta_j\circ\alpha_j ^{-1}$ for each $j\in\mathbb N$.   Moreover, $\alpha_j$ vanishes to order four 
at the origin for all $j\in\mathbb N$ (we note that in the statement of Theorem 4.1 in \cite{Forstneric2003}, the assumption that $C$ is a Stein compact is missing, which 
seems necessary for the claim of  the varnishing order to hold).  \

Set $\psi_j:=\gamma_j\circ\alpha_j$ on $A'$ and $\psi_j:=\beta_j$ on $B'$.  Then $\alpha_j(A)$ has a defining function on the form \eqref{deffunction}, 
so for large $j$ we have that $\pi_1(\alpha_j(A))\subset V_1$.  So $\psi_j(A)\subset W_j\times\mathbb C^{n-1}$ for large $j$.  \

For a large $j$ let $\tilde D\subset L$ be a domain that agrees with $W_j$ near $(R,0,...,0)$, and such that $\pi_1(\Omega'')\subset W_j$.
For a small $\lambda>0$ set $\tilde D_\lambda:=\pi_1(\psi_j\circ F_2\circ F_1)(D_\lambda)$, and set $\psi=\psi_j\circ F_2\circ F_1 - (R,0,...,0)$.

\end{proof}

\bibliographystyle{amsplain}

\end{document}